\documentclass{amsart}
\usepackage{amsmath, enumerate}
\usepackage{algorithm}
\usepackage{algpseudocode}
\usepackage{amsfonts}
\usepackage{amssymb}
\usepackage{amsthm}
\usepackage{color}
\usepackage{caption}
\usepackage{amsrefs}
\usepackage{graphicx}
\usepackage{url}

\newcommand{\Z}{\mathbb{Z}}
\newcommand{\gam}{\Gamma}

\newcommand{\R}{\mathbb{R}}
\newcommand{\bF}{\mathbb{F}}

\newcommand{\G}{\Gamma}
\newcommand{\B}{\mathcal{B}}
\newcommand{\GB}{\gam_{\B}}

\newcommand{\T}{\mathcal{T}}

\newcommand{\haf}{H^*(A(\gam),\bF)}

\newcommand{\bp}{\begin{problem}}

\newcommand{\ep}{\end{problem}}

\newcommand{\ba}{\begin{answer}}

\newcommand{\ea}{\end{answer}}

\newcommand{\ben}{\renewcommand{\theenumi}{\alph{enumi}}

\renewcommand{\labelenumi}{(\theenumi)}\begin{enumerate}}

\newcommand{\een}{\end{enumerate}}

\newcommand{\sgn}{\mathrm{sgn}}

\newtheorem{defin}{Definition}[section]
\newtheorem{thm}[defin]{Theorem}
\newtheorem{cor}[defin]{Corollary}

\newtheorem{lem}[defin]{Lemma}
\newtheorem{prop}[defin]{Proposition}
\newtheorem{quest}[defin]{Question}

\newtheorem{ex}[defin]{Example}

\DeclareCaptionType{result}[Table][List of Results]

\title[The cohomology basis graph]{Right-angled Artin groups and the cohomology basis graph}
\begin{document}
\date{\today}
\keywords{determinant; cohomology; minor; right-angled Artin group; planar graph, outerplanar graph}
\subjclass[2020]{15A15, 20F36, 05C10, 05C83}

\author[R. Flores]{Ram\'{o}n Flores}
\address{Ram\'{o}n Flores, Department of Geometry and Topology, University of Seville, Spain}
\email{ramonjflores@us.es}

\author[D. Kahrobaei]{Delaram Kahrobaei}
\address{Delaram Kahrobaei, Departments of Mathematics and Computer Science, Queens College, CUNY, Department of Computer Science, University of York, UK,
Department of Computer Science and Engineeing, New York University, Tandon School of Engineering, The Initiative for the Theoretical Sciences, CUNY Graduate Center}
\email{delaram.kahrobaei@qc.cuny.edu, dk2572@nyu.edu, delaram.kahrobaei@york.ac.uk}

\author[T. Koberda]{Thomas Koberda}
\address{Thomas Koberda, Department of Mathematics, University of Virginia, Charlottesville, VA 22904}
\email{thomas.koberda@gmail.com}

\author[C. Le Coz]{Corentin Le Coz}
\address{Corentin Le Coz, Department of Mathematics: Algebra and Geometry (WE01), Universiteit Gent, Belgium}
\email{corentinlecoz@outlook.com}

\begin{abstract}
Let $\Gamma$ be a finite graph and let $A(\Gamma)$ be the corresponding right-angled Artin group. From an arbitrary basis $\mathcal B$ of $H^1(A(\Gamma),\mathbb F)$ over an arbitrary field, we construct a natural graph $\Gamma_{\mathcal B}$ from the cup product,
called the \emph{cohomology basis graph}. We show that $\Gamma_{\mathcal B}$ always contains $\Gamma$ as a subgraph. This provides
an effective way to reconstruct the defining graph $\Gamma$ from the cohomology of $A(\Gamma)$,
to characterize the planarity of the defining graph from the algebra of $A(\Gamma)$,
and to recover many other natural graph-theoretic invariants.
We also investigate the behavior
of the cohomology basis graph under passage to elementary subminors, and show that it is not well-behaved under edge contraction.

\end{abstract}

\maketitle

\section{Introduction}
\label{Intro}

This paper forms part of a program to study the relationship between the algebraic structure of right-angled Artin groups and the combinatorial
structure of graphs, and specifically
how one can extract combinatorial properties of a graph $\gam$ from an abstract group $G$ which is isomorphic to $A(\gam)$. The methods
of this paper investigate the interplay between group theory, linear algebra, algebraic topology, combinatorics, and commutative algebra
which arise in the study of graphs and right-angled Artin groups.
The reader is
directed to~\cite{koberda21survey} for background and commentary.

Translational tools between combinatorial properties and algebraic properties are interesting from a purely theoretical point of view, and
they also arise in applied contexts such as group based cryptography (cf.~\cite{FKK2019}, for instance). Computational tractability motivates
the present work to a high degree.

Good characterizations of many combinatorial properties of graphs via the algebraic structure of right-angled Artin groups have been obtained
by various authors.
For instance: being a nontrivial join~\cite{Servatius1989},
being disconnected~\cite{BradyMeier01}, containing a square~\cite{Kambites09,KK2013gt}, being a co-graph~\cite{KK2013gt,KK2018jt},
being a finite tree or complete bipartite graph~\cite{HS2007}, admitting a nontrivial automorphism~\cite{FKK2019}, being $k$--colorable~
\cite{FKK2020a},
fitting in a sequence of expanders~\cite{FKK2020exp}, admitting a Hamiltonian path or cycle~\cite{FKK-hamilton}, and being
(outer)planar~\cite{gheorghiu23}. In this paper, we present a new perspective on characterizing (outer)planarity of the underlying
graph, through the cohomology of the right-angled Artin group, which makes use of the properties of the Colin de Verdi\`ere invariant \cite{CdV1990} and permits
to describe other graph properties in terms of groups, as for example being a linear forest or being linklessly embeddable in $\R^3$.
 Moreover, our method also characterizes when the complement of the graph has some of these properties. The key and more difficult point of our argument is Theorem \ref{thm:main}, which establishes a certain embedding of graphs.


Let us now give a more precise setup. Let $\gam$ be a finite \emph{simplicial} graph.
That is, $\gam$ is undirected, and its geometric realization is a $1$--dimensional simplicial complex.
Such graphs are also sometimes called \emph{simple}.
This paper focusses on the problem of extracting combinatorial information about
$\Gamma$ from the associated right-angled Artin group
\[A(\gam)=\langle V(\gam)\mid [v,w]=1,\, \{v,w\}\in E(\gam)\rangle.\]

Let $\bF$ be an arbitrary field. We consider the cohomology ring $H^*(A(\gam),\bF)$. It is well-known that $\haf$ can be recovered
from an arbitrary presentation of $A(\gam)$; see Subsection~\ref{ss:biautomatic} below. Moreover, $A(\gam)$ is \emph{$1$--formal}, meaning
that $\haf$, and in particular the restriction of the cup product to the first degree, completely determines $\gam$. The fact that the
right-angled Artin group determines the underlying graph up to isomorphism is obtained as the main
result of~\cite{Droms87}, and a related rigidity result was established by~\cite{Sabalka2009}. An alternative proof
that the cohomology ring of a right-angled Artin group determines the underlying graph is given as Theorem 6.4 in~\cite{KoberdaGAFA};
cf.~Theorem 15.2.6 of~\cite{koberda21survey}.

\subsection{The cohomology basis graph and the defining graph}
We will be interested in effective ways of reconstructing $\gam$ from $A(\gam)$, especially
through $\haf$. For this, let $\mathcal B$ be an arbitrary
basis of $H^1(A(\gam),\bF)$. The \emph{cohomology basis graph} associated to $\GB$ is the graph whose vertices are elements of $\B$,
and whose edge relation is given by having a nontrivial cup product.

The main result of this paper is the following. We give an algebraic topology--combinatorics version, though there are many other equivalent
formulations; see Section~\ref{sect:characterizations} below. Here and in what follows, we do not require \emph{subgraphs} to be full.

\begin{thm}\label{thm:main}
Let $\gam$ be a finite simplicial graph and let $\B$ be an arbitrary basis for $H^1(A(\gam),\bF)$. Then $\gam$ is a subgraph of
$\GB$.
\end{thm}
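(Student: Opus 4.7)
The cohomology ring $\haf$ admits its standard description as the exterior Stanley--Reisner algebra of the flag complex of $\gam$: in the canonical basis $\{v^* : v \in V(\gam)\}$ of $H^1$, the product $v_i^* \smile v_j^*$ is non-zero in $H^2$ precisely when $\{i,j\} \in E(\gam)$. Writing $\B = \{b_1,\ldots,b_n\}$ via an invertible change-of-basis matrix $C = (C_{ij})$ with $b_i = \sum_j C_{ij} v_j^*$, one computes
\[
b_i \smile b_j \;=\; \sum_{\{k,l\} \in E(\gam)} \det C[\{i,j\},\{k,l\}] \cdot (v_k^* \smile v_l^*),
\]
so $\{b_i,b_j\}$ is an edge of $\GB$ iff some $2\times 2$ minor of $C$ with rows $\{i,j\}$ and columns forming an edge of $\gam$ is non-zero. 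The plan is to produce $\sigma \in S_n$ such that for every edge $\{k,l\} \in E(\gam)$ the pair $\{\sigma(k), \sigma(l)\}$ indexes a non-vanishing minor of the above form.

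The first step would be to characterize non-edges of $\GB$ structurally. Viewing the columns of $C[\{i,j\},\cdot]$ as points $c_k := (C_{ik}, C_{jk}) \in \bF^2$, the pair $\{i,j\}$ fails to be an edge of $\GB$ exactly when $c_k$ and $c_l$ are linearly dependent for every edge $\{k,l\}$ of $\gam$. Letting $Z = \{k : c_k = 0\}$ and $N = [n] \setminus Z$, the non-zero $c_k$ are then proportional on each connected component of $\gam[N]$, while the linear independence of rows $i$ and $j$ of $C$ forces at least two distinct directions in $\bF^2$ to appear across the components. Each non-edge of $\GB$ thus determines a rigid ``projective coloring'' of $V(\gam)$ by $\bF P^1 \cup \{*\}$ compatible with the component structure of $\gam[N]$, a condition that is highly restrictive when $\gam$ is well-connected; a first byproduct is that independent sets in $\GB$ span cup-product-isotropic subspaces of $H^1$.

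To build $\sigma$, I would start from a \emph{Hall permutation} $\pi \in S_n$ with $C_{i,\pi(i)} \neq 0$ for every $i$, produced by the Leibniz expansion of $\det C \neq 0$. Small examples show that a single Hall permutation need not yield an embedding, so the decisive step is to select among, or go beyond, the Hall permutations using the rigidity of the projective colorings above. A natural formulation is as a Hall marriage-type condition on a bipartite incidence graph between $V(\gam)$ and $\B$ whose perfect matchings correspond to embeddings $\gam \hookrightarrow \GB$. The main obstacle is precisely this last step: while each individual non-edge of $\GB$ is heavily constrained, one must simultaneously avoid all of them along the image $\sigma(E(\gam))$. I expect the argument to close either by an inductive reduction that splits $\gam$ according to the partition a non-edge of $\GB$ induces on $V(\gam)$, or by establishing a polynomial identity controlling the alternating sum $\sum_\sigma \sgn(\sigma) \det A^\sigma$ --- where $A^\sigma = (\det C[\{\sigma(k), \sigma(l)\}, e])_{\{k,l\}, e \in E(\gam)}$ --- in terms of $\det C$, so that at least one $\sigma$ must satisfy the embedding condition.
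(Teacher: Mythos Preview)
Your setup is correct, and your ``projective coloring'' analysis of a non-edge of $\GB$ is essentially the paper's $\gam$-null-connectedness relation between rows of $C$. But the proposal stops short of a proof: you leave the decisive step open, offering two candidate strategies without executing either. The first --- an inductive reduction splitting $\gam$ along the partition a non-edge induces --- does not work; the paper exhibits a four-vertex example in which an embedding established for a subgraph fails to extend when a single edge is added, so $\sigma$ cannot be built incrementally and admits no canonical choice.

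The missing idea, closer in spirit to your second candidate but organized quite differently, is a block decomposition of the Leibniz expansion of $\det C$. One groups the nonzero entries of $C$ into maximal ``$\gam$-$1$-blocks'': submatrices with all entries nonzero, column indices spanning a connected subgraph of $\gam$, and rows lying in a single $\gam$-null-connected component. These blocks are shown to have one-dimensional row space and to be pairwise disjoint. Each nonzero Leibniz term $\sgn(\sigma)\prod_i C_{\sigma(i)}^i$ then belongs to a unique ``track'' recording which blocks it meets, and any track touching a block in two or more rows contributes zero to $\det C$ (the rank-one row space forces cancellation over permutations of those rows). Hence if \emph{every} $\sigma$ failed to give an embedding --- i.e.\ some edge of $\gam$ always landed on a $\gam$-null-connected pair of rows --- every Leibniz term would sit in such a vanishing track and $\det C=0$, contradicting invertibility. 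Your proposed alternating sum $\sum_\sigma \sgn(\sigma)\det A^\sigma$ over $|E(\gam)|\times |E(\gam)|$ matrices has no evident relation to $\det C$; the track decomposition stays inside the single $n\times n$ determinant, and that is what makes the argument close.
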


The importance of this result for us is two--fold: first, it is the cornerstone on which
almost all results in the present paper are based, and
 permits the use of minor-monotonicity of the Colin de Verdi\`ere invariant to provide group-theoretic characterizations of many graph properties; cf.~Theorem \ref{cor:planar}. Second, it
has equivalent formulations in other contexts (such as in commutative algebra), wherein the corresponding results were
previously unknown.

In the course of the proof, we develop a novel perspective on
the computation of the determinant of an invertible matrix. This method greatly generalizes the approach used in
\cite{FKK-hamilton}, and highlights the role of certain graphs that arise naturally from the structure
of the minors of the matrix; cf.~Section \ref{sect:null-connected}.
This is where the main difficulty in establishing Theorem~\ref{thm:main}
lies: the main result implicitly finds a bijection between the vertices of $\gam$ and the basis $\B$ which has good algebraic properties,
 though in general no canonical bijection exists.


Since it is easy to see that $\gam$ and $\GB$ have the same number of vertices,
Theorem~\ref{thm:main} says that $\gam$ can be obtained from $\GB$ by deleting edges:

\begin{cor}\label{cor:minimal-edge}
For a finite simplicial graph $\gam$, we have that $\gam\cong\GB$ for any basis $\B$ of $H^1(A(\gam),\bF)$ which minimizes the number of
edges in $\GB$.
\end{cor}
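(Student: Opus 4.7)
The plan is to reduce Corollary~\ref{cor:minimal-edge} to Theorem~\ref{thm:main} by exhibiting a single distinguished basis that already realizes $\gam$ itself as its cohomology basis graph; minimality will then force every edge-minimizing basis to do the same. First I would observe that the abelianization of $A(\gam)$ is $\Z^{|V(\gam)|}$, so $H^1(A(\gam),\bF)$ is a vector space of dimension $|V(\gam)|$ for every field $\bF$. Consequently $\GB$ has the same number of vertices as $\gam$, regardless of the basis $\B$.

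Next I would pin down the minimum explicitly. Let $\B_0 = \{x_v : v \in V(\gam)\}$ be the basis of $H^1(A(\gam),\bF)$ dual to the standard generators in $A(\gam)^{ab}$. Since the classifying space of $A(\gam)$ is the Salvetti complex, whose cohomology ring $\haf$ is the exterior face algebra of $\gam$, one has $x_v \cup x_w \neq 0$ if and only if $\{v,w\} \in E(\gam)$. Therefore $\Gamma_{\B_0} \cong \gam$, and so the minimum of $|E(\GB)|$ taken over all bases $\B$ is at most $|E(\gam)|$.

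To finish, I would invoke Theorem~\ref{thm:main}, which says that $\gam$ is a subgraph of $\GB$ for every basis $\B$; in particular $|E(\GB)| \geq |E(\gam)|$ always. Combined with the previous step, the minimum is exactly $|E(\gam)|$, attained by $\B_0$. If $\B$ is any basis achieving this minimum, then the subgraph inclusion $\gam \hookrightarrow \GB$ of finite graphs with the same number of vertices and the same number of edges must be a bijection on both vertex and edge sets, whence $\gam \cong \GB$.

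There is no genuine obstacle beyond invoking Theorem~\ref{thm:main}: the corollary is a direct consequence of the general subgraph inclusion together with the existence of the vertex-dual basis. The content is really a reformulation of Theorem~\ref{thm:main} as a minimization problem, and the substantive work lies in the theorem itself.
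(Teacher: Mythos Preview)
Your argument is correct and matches the paper's own reasoning: the paper notes that the corollary is immediate from Theorem~\ref{thm:main} together with the observation that $\gam$ and $\GB$ have the same number of vertices, so an inclusion with equal edge counts forces an isomorphism. Your use of the vertex-dual basis $\B_0$ to pin down the minimum is exactly the content of Lemma~\ref{lem:coho} in the paper.
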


From Corollary~\ref{cor:minimal-edge}, one can give an \emph{a priori} bound on
the complexity of reconstructing $\gam$ from $\haf$, since one can apply
the corollary to a field with two elements, over which there are only finitely many bases.

Recall that to every finite graph $\gam$, we may associate the \emph{Colin de Verdi\`ere invariant} $\mu(\gam)$, which is a natural
number that characterizes disconnected graphs, forests, outerplanar graphs, planar graphs, and many other classes of graphs.
The reader may find the definition and basic properties of $\mu(\gam)$ in Subsection~\ref{ss:cdv} below.


\begin{thm}\label{thm:cdv}
For a finite simplicial graph $\gam$ and natural number $k$, we have that $\mu(\gam)\leq k$ if and only if there exists a basis $\B$ of
$H^1(A(\gam),\bF)$ such that $\mu(\GB)\leq k$.
\end{thm}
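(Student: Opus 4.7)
The plan is that Theorem~\ref{thm:main} already does essentially all of the work, and this corollary is extracted by combining it with two standard facts: the existence of a distinguished ``vertex basis'' on $H^1(A(\gam),\bF)$, and the minor-monotonicity of the Colin de Verdi\`ere invariant. Recall that $\mu$ is minor-monotone, so in particular $\mu(H)\leq \mu(G)$ whenever $H$ is a subgraph of $G$ (since subgraphs are obtained by vertex and edge deletion, a special case of taking minors).

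For the forward implication, I would exhibit an explicit basis $\B_0$ that makes $\GB$ equal to $\gam$ on the nose. Namely, the vertex generators $V(\gam)\subseteq A(\gam)$ descend to a basis of the abelianization, and the dual basis $\B_0\subseteq H^1(A(\gam),\bF)$ has the property that two elements cup to a nonzero class in $H^2$ precisely when the corresponding vertices of $\gam$ are adjacent; this is the standard computation of $H^*(A(\gam),\bF)$ as the exterior face ring of the flag complex of $\gam$, already recalled in the introduction. Consequently $\gam_{\B_0}\cong \gam$, and so $\mu(\gam_{\B_0})=\mu(\gam)\leq k$.

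For the converse, suppose $\B$ is a basis of $H^1(A(\gam),\bF)$ with $\mu(\GB)\leq k$. Theorem~\ref{thm:main} supplies an embedding of $\gam$ as a subgraph of $\GB$. A subgraph is a minor, so by minor-monotonicity of $\mu$ we obtain
\[
\mu(\gam)\leq \mu(\GB)\leq k,
\]
which is the desired inequality.

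Since the two implications together occupy only a handful of lines once Theorem~\ref{thm:main} is in hand, there is no real obstacle to overcome in the proof of Theorem~\ref{thm:cdv} itself; the entire difficulty has been absorbed into establishing that $\gam\subseteq \GB$ for every basis $\B$. The only minor point that I would be careful about is to spell out that the ``standard'' basis dual to $V(\gam)$ really produces $\GB\cong\gam$, so that the forward direction is genuinely witnessed by an explicit choice and does not implicitly use Theorem~\ref{thm:main}.
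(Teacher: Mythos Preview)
Your proposal is correct and matches the paper's own argument: the paper simply notes that Theorem~\ref{thm:cdv} follows from Theorem~\ref{thm:main} together with the minor-monotonicity of the Colin de Verdi\`ere invariant, with the forward direction witnessed by the standard vertex-dual basis of Lemma~\ref{lem:coho}. Your write-up just makes explicit the two directions that the paper leaves to the reader.
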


From the computability of the cohomology ring, (cf.~Section \ref{ss:biautomatic}), we have the following consequence:


\begin{cor}\label{cor:decidable}
From an arbitrary finitely presented group $G=\langle S\mid R\rangle$ such that $G\cong A(\gam)$ for some finite simplicial graph $\gam$,
the value of $\mu(\gam)$ is computable from $\langle S\mid R\rangle$.
\end{cor}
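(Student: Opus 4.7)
The plan is to combine Theorem \ref{thm:cdv} with the computability of $H^*$ from a presentation (referenced in Subsection \ref{ss:biautomatic}) and reduce the problem to a finite enumeration carried out over the field $\bF=\bF_2$. Working in characteristic two is the crucial device: since $\bF_2$ is finite and $\dim_{\bF_2} H^1$ is finite, the collection of ordered bases of $H^1(A(\gam),\bF_2)$ is finite and effectively enumerable.

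First I would invoke the procedure of Subsection \ref{ss:biautomatic} to compute, from the input presentation $\langle S\mid R\rangle$, the graded commutative $\bF_2$-algebra structure on $H^*(G,\bF_2)$ through degree two. This yields the dimension $n=\dim_{\bF_2}H^1(G,\bF_2)$ (which equals $|V(\gam)|$) together with the cup product pairing $H^1(G,\bF_2)\otimes H^1(G,\bF_2)\to H^2(G,\bF_2)$. I then enumerate the $|\GL_n(\bF_2)|$ ordered bases of $H^1(G,\bF_2)$, and for each basis $\B$ I construct the cohomology basis graph $\GB$ by evaluating the cup product on each unordered pair of basis vectors; this is a finite computation inside the algebra obtained above.

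For each graph $\GB$ so produced, I compute $\mu(\GB)$. This is a well-defined finite computation for any finite graph: the invariant of a graph on $n$ vertices is bounded above by $n-1$, and the semi-algebraic conditions defining $\mu$ (symmetric real matrices with prescribed sign pattern, with prescribed number of negative eigenvalues, and enjoying the strong Arnold property) are decidable, for example via the Tarski--Seidenberg algorithm applied to the theory of $\R$ as a real closed field. By Theorem \ref{thm:cdv} applied over $\bF=\bF_2$, one has
\[
\mu(\gam)\;=\;\min_{\B}\,\mu(\GB),
\]
where the minimum ranges over the finite collection of bases of $H^1(G,\bF_2)$; outputting this minimum returns $\mu(\gam)$.

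The only nontrivial obstacle is Step 1: extracting the cup product structure on $H^*(A(\gam),\bF_2)$ from an arbitrary finite presentation of the isomorphic group $G$. This is precisely the content of Subsection \ref{ss:biautomatic}, where the biautomatic structure on RAAGs does the work, and so is available by hypothesis. Everything else is a finite — if combinatorially expensive — enumeration, and the bound $|\GL_n(\bF_2)|$ on the number of bases, together with the computability of $\mu$ on a finite graph, closes the argument.
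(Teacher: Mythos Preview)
Your argument is correct and follows the same route the paper sketches: compute the cup product on $H^1$ over $\bF_2$ from the presentation (Subsection~\ref{ss:biautomatic}), range over the finitely many bases, and use Theorem~\ref{thm:cdv} to identify $\mu(\gam)$ as $\min_{\B}\mu(\GB)$. The paper's own proof is a single sentence (``As right-angled groups are biautomatic, Corollary~\ref{cor:decidable} follows''), so your write-up simply makes explicit the finite enumeration over $\GL_n(\bF_2)$ and the decidability of $\mu$ on a fixed finite graph via real-closed-field quantifier elimination---details the paper leaves implicit.
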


Recall that a graph is \emph{planar} if its geometric realization can be embedded in the plane, and \emph{outerplanar} if it can be embedded
in the plane in such a way that every vertex is adjacent to the unbounded component of the complement. Moreover, a graph is \emph{linklessly embeddable} in $\mathbb{R}^3$
if there is an embedding of the graph in $\mathbb{R}^3$ such that no pair of cycles are linked after being embedded; observe that this property can be thought of as a 3-dimensional analogue of planarity.


Obtaining the following consequence was another motivation for carrying out the present work.

\begin{thm}\label{cor:planar}
Let $P$ be a property of graphs that is characterized by excluding a class of forbidden subgraphs. A finite simplicial graph $\gam$ has property
$P$ if and only if there exists a basis $\B$ of
$H^1(A(\gam),\bF)$ such that $\GB$ has property $P$.

Moreover, let $P$ one of the following graph properties:

\begin{itemize}

\item Emptiness (having no edges).
\item Being a linear forest (union of disjoint paths).
\item Planarity.
\item Outerplanarity.
\item Linkless embeddability.

\end{itemize}

Then a finite simplicial graph $\gam$ has property $P$ if and only if there exists a basis $\B$ of
$H^1(A(\gam),\bF)$ such that $\GB$ has property $P$.
\end{thm}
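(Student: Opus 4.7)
The plan is to derive both assertions directly from Theorem~\ref{thm:main} and Corollary~\ref{cor:minimal-edge}, by reducing the equivalence in each case to the observation that the property under consideration is hereditary under passage to subgraphs.

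For the first assertion, note that ``$P$ is characterized by excluding a class of forbidden subgraphs'' is equivalent to ``$P$ is closed under taking subgraphs'': one direction is immediate, and the other is obtained by taking the class of minimal graphs not satisfying $P$. Granting this, the backward direction of the biconditional is immediate from Theorem~\ref{thm:main}: if some basis $\B$ yields $\GB$ with property $P$, then $\gam$ is a subgraph of $\GB$, hence itself has $P$. The forward direction uses Corollary~\ref{cor:minimal-edge}: if $\gam$ has $P$, choose $\B$ to minimize the number of edges in $\GB$, so that $\GB \cong \gam$ has $P$.

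For the ``moreover'' clause, it suffices by the first assertion to verify that each of the five listed properties is subgraph-monotone. Emptiness is trivial, with forbidden subgraph $K_2$. A graph is a linear forest precisely when it is acyclic and of maximum degree at most two, both of which pass to arbitrary subgraphs (the forbidden subgraphs being $K_{1,3}$ and all cycles $C_n$). Planarity and outerplanarity are preserved under deletion of vertices and edges, since any (outer)planar embedding of $G$ restricts to an (outer)planar embedding of any subgraph. Linkless embeddability in $\R^3$ is preserved under subgraphs for the same reason. Alternatively, one can bypass this case-check by invoking Theorem~\ref{thm:cdv} together with the classical characterizations of these five properties as the condition $\mu(\gam) \leq k$ for $k \in \{0,1,2,3,4\}$, respectively.

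The substantive work has already been absorbed into Theorem~\ref{thm:main}; once the subgraph containment $\gam \subseteq \GB$ is in hand, the remaining content is the routine verification that each property on the list is hereditary under taking subgraphs, so no genuine obstacle arises.
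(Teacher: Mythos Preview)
Your proof is correct and aligns with the paper's approach. The paper dispatches the theorem in one sentence, saying it ``is implied in turn by Theorem~\ref{thm:cdv}, together with the properties of the invariant discussed in Section~\ref{ss:cdv}''; this covers the ``moreover'' clause via the Colin de Verdi\`ere route you list as your alternative, while the first (forbidden-subgraph) assertion is left implicit and is handled exactly as you do, via Theorem~\ref{thm:main} and the observation that forbidden-subgraph properties are subgraph-hereditary. Your write-up is more explicit on both points, but there is no substantive difference in strategy.
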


One must be careful in generalizing Theorem~\ref{cor:planar} to forbidden minors, since the cohomology basis graph does not behave
well under taking minors of $\gam$; see Section~\ref{sec:minors} below. Theorem~\ref{cor:planar} behaves well for properties which are
monotone with respect to taking minors, such as having Colin de Verdi\`{e}re invariant bounded by a fixed integer.
See~\cite{Chartrand} for a discussion of graphs characterized by forbidden subgraphs.

Analogous characterizations of some of the properties enumerated in Theorem~\ref{cor:planar}
for the complement of a given graph are also possible; see Proposition \ref{prop:complement} below.

As was mentioned already, the formulation of the previous theorem makes no reference to any distinguished set of generators of the group $A(\gam)$. Moreover, information about graph properties can be effectively obtained out of any presentation of the associated right-angled Artin group, via the basis cohomology graph associated to that presentation and using $\mathbb{F}_2$-coefficients (see Example \ref{ex2} and the discussion at the end of Section \ref{ss:biautomatic}). This last observation contrasts with the recent results of Gheorghiu in~\cite{gheorghiu23}, at least
from the computational point of view. Indeed, Gheorghiu finds an intrinsic characterization of right-angled Artin groups on planar graphs
for instance, but it is not clear whether his methods are effective.
In this vein, we note that Theorem~\ref{cor:planar}
also furnishes a linear algebraic characterization of planarity of finite simple graphs, in the spirit of Maclane (cf.~\cite{maclane}).

The full extend of information which can be gleaned from the cohomology basis graph is not yet clear.
\begin{quest}
Let $\gam$ and $A(\gam)$ be as above.
\begin{enumerate}
\item
What is
the effect of different fields on the structure of the cohomology basis graph? Note that depending on the characteristic, different edges may
appear or be deleted.
\item
What further data about the defining graph can be extracted from the cohomology basis graph?
For instance, how can the cycles of $\gam$ be read off?
\item
If one considers all possible bases for the cohomology of $A(\gam)$, one obtains a partially ordered set under inclusion of subgraphs. Which
subgraphs between $\gam$ and the complete graph on the vertices of $\gam$ occur? To what extent does
the answer depend on $\gam$?
\end{enumerate}
\end{quest}

The paper is structured as follows: in Section \ref{sect:background} we provide
background about right-angled Artin groups and the Colin de Verdi\`{e}re invariant, as well as different aspects of the
cohomology of right-angled Artin groups that are relevant
to our analysis. Section \ref{sect:null-connected} contains the proofs of the main results. In Section \ref{sect:examples} we
describe some specific examples which illustrate the difficulties in establishing the main result.
We conclude with Section \ref{sect:minors}, where we investigate the relationship between
the minors of the defining graph and of the cohomology basis graphs in more detail.


\section{Background}
\label{sect:background}
\subsection{Notation and terminology}
We follow generally accepted conventions and notation for graphs; see~\cite{diestel-book}, for instance. Of particular interest will
be \emph{graph minors}. A graph $\Lambda$ is an \emph{elementary minor} of $\gam$ if $\Lambda$ is obtained from $\gam$ by deleting
a vertex, deleting an edge, or contracting an edge. We say that $\Lambda$ is a \emph{minor} of $\gam$ if there is a sequence
$\{\gam_0,\ldots,\gam_n\}$ of graphs such that \[\gam=\gam_0,\quad \gam_n=\Lambda,\quad \gam_{i+1}\textrm{ is an elementary minor
of $\gam_i$ for all $i$}.\]

We will adopt some slightly nonstandard linear algebra terminology. For a matrix $A$, we will write entries $a_i^j$, where $i$ indicates the
row and $j$ indicates the column. We will write $\{a_1,\ldots,a_n\}$ for the rows of a matrix.
A \emph{minor} of $A$ is simply a (possibly empty) square submatrix of $A$ obtained by deleting some (possibly empty) collection
of rows and columns of $A$. The \emph{dimension}
of such a minor is just the number of rows or columns in the minor.

We will write $S_n$ for the symmetric group on $n$ letters, and $\sigma$ for an arbitrary element of $S_n$.

\subsection{Cohomology of right-angled Artin groups}\label{ss:coho-raag}

We recall some basic facts about the structure of the cohomology algebra of a right-angled Artin group $A(\gam)$. The
result recorded here is easy and well--known, and follows from
standard methods in geometric topology together with the fact that the Salvetti complex associated to $\gam$ is a classifying space for
$A(\gam)$. More details can be found in in~\cite{FKK2020exp,koberda21survey}, for instance.

Let $V(\gam)=\{v_1,\ldots,v_n\}$ and $E(\gam)=\{e_1,\ldots,e_m\}$ be the vertices and edges of $\gam$,
and write $\smile$ for the cup product pairing
on $H^*(A(\gam),\bF)$.

\begin{lem}\label{lem:coho}
Let $\gam$ be a finite simplicial graph. Then there are bases $\{v_1^*,\ldots,v_n^*\}$ for $H^1(A(\gam),\bF)$ and $\{e_1^*,\ldots,e_m^*\}$ for
$H^2(A(\gam),\bF)$ such that:
\begin{enumerate}
\item
We have $v_i^*\smile v_j^*= 0$ if and only if $\{v_i,v_j\}\notin E(\gam)$;
\item
We have $v_i^*\smile v_j^*=\pm e_k^*$ whenever $\{v_i,v_j\}=e_k\in E(\gam)$.
\end{enumerate}
\end{lem}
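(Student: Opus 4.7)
The plan is to build the cohomology ring explicitly from the Salvetti complex $S(\Gamma)$, which is a classifying space for $A(\Gamma)$, and to read off both bases (together with the cup product structure) directly from its cellular structure. Recall that $S(\Gamma)$ has a single $0$--cell, one $1$--cell for each vertex of $\Gamma$, one $2$--cell (a standard $2$--torus) for each edge of $\Gamma$, and more generally one $k$--cell (a $k$--torus) for each $k$--clique. By construction, $S(\Gamma)$ sits as a CW subcomplex of the $n$--torus $T^n = (S^1)^{V(\Gamma)} = K(\Z^n,1)$, and the inclusion is cellular.

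Given this, I would argue as follows. The cellular cochain complex of $T^n$ has its cohomology equal to the exterior algebra $\bigwedge \bF^n$ on classes $v_1^*,\ldots,v_n^*$ dual to the circle factors. Since $S(\Gamma) \hookrightarrow T^n$ is a cellular inclusion whose $k$--cells are precisely the $k$--sub-tori of $T^n$ indexed by $k$--cliques of $\Gamma$, the induced map on cellular cochains is the surjection of graded $\bF$--modules whose kernel in each degree is spanned by those wedge monomials whose support is not a clique. Consequently $H^*(A(\Gamma),\bF) = H^*(S(\Gamma),\bF)$ is the quotient of $\bigwedge \bF^n$ by the ideal generated by $v_i^* \smile v_j^*$ for $\{v_i,v_j\} \notin E(\Gamma)$, with the inherited exterior--algebra cup product.

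With this model in hand, both statements of the lemma are immediate. The classes $v_i^*$ form a basis of $H^1(A(\Gamma),\bF)$ since $H^1$ is freely generated as an $\bF$--module by the duals of the $1$--cells. For each edge $e_k = \{v_i,v_j\}$ with $i < j$, define $e_k^* := v_i^* \smile v_j^*$; these are exactly the duals of the $2$--cells of $S(\Gamma)$, so they form a basis of $H^2(A(\Gamma),\bF)$, giving statement (2). For statement (1), if $\{v_i,v_j\} \notin E(\Gamma)$ then $v_i^* \smile v_j^*$ lies in the defining ideal, hence is zero; conversely, if $\{v_i,v_j\} \in E(\Gamma)$ then $v_i^* \smile v_j^* = \pm e_k^*$ is a basis element and therefore nonzero. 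The only subtlety is keeping track of the sign $\pm$ coming from the anticommutativity of the exterior product, which depends purely on the chosen ordering of vertices and disappears once one fixes the convention $e_k^* = v_i^* \smile v_j^*$ with $i < j$.
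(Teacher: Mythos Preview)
Your proposal is correct and follows exactly the approach the paper indicates: the paper does not give a proof of this lemma at all, stating instead that it ``is easy and well--known, and follows from standard methods in geometric topology together with the fact that the Salvetti complex associated to $\Gamma$ is a classifying space for $A(\Gamma)$,'' with references to \cite{FKK2020exp,koberda21survey}. Your argument is precisely a clean elaboration of that outline---using the cellular inclusion $S(\Gamma)\hookrightarrow T^n$ to identify $H^*(A(\Gamma),\bF)$ with the quotient of the exterior algebra by the non-edge monomials---so there is nothing to compare.
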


Let \[w_i=\sum_{j=1}^n a_i^j v_j^*\] for $i=1,2$, and for coefficients $a_i^j$ in a field.
From Lemma~\ref{lem:coho}, we observe that $w_1\smile w_2\neq 0$ if and only if there is a pair of indices $j$ and $k$ such that $v_j^*\smile
v_k^*\neq 0$ and the matrix \[\begin{pmatrix}a_1^j&a_1^k\\ a_2^j&a_2^k\end{pmatrix}\] is nonsingular. This fact will be used implicitly
throughout the rest of this paper.

\subsection{The Colin de Verdi\`ere invariant}\label{ss:cdv}
The Colin de Verdi\`ere invariant of a graph is an invariant arising from spectral graph theory, and which gives a vast generalization of
classical planarity criteria for graphs.
General references on the Colin de Verdi\`ere invariant are \cite{CdV1990} and \cite{CdV1999}. We give a brief
summary of the definition and main properties of this invariant for the convenience of the reader, and which can be found in the
aforementioned references.

We represent a finite simple connected graph $\gam$ by its vertex set $V=\{1,\ldots,n\}$ and its edge set $E$. We consider symmetric real $n\times n$
matrices $M$ such that the following three conditions hold:
\begin{itemize}
\item
For all distinct indices $1\leq i,j\leq n$, we have $M_i^j<0$ if $\{i,j\}\in E$, and $M_i^j=0$ otherwise;
\item
$M$ has exactly one negative eigenvalue of multiplicity one;
\item
There is no nonzero symmetric real $n\times n$ matrix $X$ such that $MX=0$ and such that $X_i^j=0$ whenever $i=j$ or $M_i^j\neq 0$.
\end{itemize}


The Colin de Verdi\`ere invariant $\mu(\gam)$ is the largest corank of any $M$ satisfying these conditions. Here, for a symmetric matrix $m\times m$ of rank $r$, the \emph{corank} is equal to $m-r$.


Although the original definition of the invariant assumes the graph to be connected, it is easy to extend the definition to disconnected
non-empty graphs by taking the maximum of the value of the invariant on the components; for empty graphs,
the invariant is defined to be zero; see \cite{KLV97}.

The following result illustrates the power of this invariant:

\begin{thm}

Let $\Gamma$ be a finite simple graph such that $\mu(\Gamma)\leq k$. Then:

\begin{itemize}

\item $k=0$ if and only if $\Gamma$ has no edges.

\item $k=1$ if and only if $\Gamma$ is a union of disjoint paths.

\item $k=2$ if and only if $\Gamma$ is outerplanar.

\item $k=3$ if and only if $\Gamma$ is planar.

\item $k=4$ if and only if $\Gamma$ is linklessly embeddable in $\R^3$.

\end{itemize}

\end{thm}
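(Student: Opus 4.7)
My approach is to prove each of the five equivalences separately, relying on two standard pillars of Colin de Verdi\`ere theory: minor monotonicity of $\mu$, and explicit spectral constructions for graphs in the target class. The forward direction (``$\mu(\gam)\le k$ implies the graph property $P_k$'') will in every case reduce to computing $\mu$ on the forbidden minors for $P_k$; the reverse direction will require producing a witness matrix of the appropriate corank.

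First I would establish minor monotonicity: if $H$ is a minor of $\gam$ then $\mu(H)\le \mu(\gam)$. This requires separate arguments for vertex deletion, edge deletion, and edge contraction; the nontrivial case is contraction, where the third axiom (the so-called Strong Arnold Hypothesis) is used to lift a witness matrix on $\gam/e$ to a witness matrix on $\gam$ of the same or larger corank. With monotonicity in hand, the forward implication for each $k$ follows by checking that every forbidden minor for $P_k$ has $\mu$-value strictly greater than $k$. Concretely: $\mu(K_2)=1$ for $k=0$; $\mu(K_3)=\mu(K_{1,3})=2$ for $k=1$ (disjoint unions of paths are exactly the graphs avoiding $K_3$ and $K_{1,3}$ as minors); $\mu(K_4)=\mu(K_{2,3})=3$ for $k=2$ (outerplanarity, by the Chartrand--Harary list); $\mu(K_5)=\mu(K_{3,3})=4$ for $k=3$ (planarity, by Wagner's theorem); and $\mu=5$ on each of the seven graphs in the Petersen family for $k=4$ (linkless embeddability, by the Robertson--Seymour--Thomas excluded-minor characterization combined with the Lov\'asz--Schrijver computations).

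The reverse directions require explicit constructions of matrices $M$ satisfying the three axioms and realizing corank $k$. For $k=0$ the empty graph admits a diagonal $M$ with one negative and $n-1$ positive entries. For paths, a tridiagonal Jacobi-type matrix with carefully chosen off-diagonal signs and a single negative eigenvalue gives corank $1$. For outerplanar and planar $\gam$, I would follow Colin de Verdi\`ere's original strategy and build $M$ from the Koebe--Andreev--Thurston circle packing associated to the embedding: the tangency data produces a Laplacian-like matrix whose kernel has dimension $2$ or $3$ depending on whether the unbounded face is treated as a vertex, and a spectral perturbation argument produces one negative eigenvalue and verifies the Strong Arnold Hypothesis. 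For $k=4$, the reverse direction is far deeper, and I would invoke the Lov\'asz--Schrijver theorem directly.

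The main obstacles are the reverse implications at $k=3$ and $k=4$. The planarity case requires genuine geometric input (circle packing plus a careful eigenvalue analysis), and the linkless case relies on the full weight of the Robertson--Seymour graph-minors machinery together with the Lov\'asz--Schrijver spectral argument; I would cite these rather than attempt to reprove them from scratch. The small cases $k=0,1,2$, by contrast, are essentially combinatorial bookkeeping once minor monotonicity and the $\mu$-values of the small forbidden minors are in hand.
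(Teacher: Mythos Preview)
The paper does not prove this theorem at all: it appears in the background Subsection~2.3 as a classical result quoted to illustrate the power of the Colin de Verdi\`ere invariant, with attribution to the original references~\cite{CdV1990,CdV1999} and~\cite{KLV97}. There is therefore no ``paper's own proof'' to compare your proposal against.

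That said, your outline is a faithful summary of how these results are established in the cited literature. The two pillars you identify---minor monotonicity of $\mu$ (with the Strong Arnold Hypothesis doing the work for edge contraction) and explicit witness matrices for graphs in each class---are exactly the structure of the original arguments. Your list of forbidden minors and their $\mu$-values is correct, and your attribution of the hard reverse directions (circle packings for $k=3$, the Lov\'asz--Schrijver theorem together with the Robertson--Seymour--Thomas minor characterization for $k=4$) matches the historical development. If anything, you are being more thorough than the paper, which simply imports the statement wholesale; for the purposes of this paper the theorem is a black box, and no proof sketch is expected or needed.
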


It is also possible to describe the planarity properties of the complement of the graph in terms of this invariant. Recall that two vertices in a graph are \emph{twins}
if they have the same set of neighbours.

\begin{thm}
\label{thm:notwins}
Let $\Gamma$ be a finite simple graph of $n$ vertices with no twin vertices, and such that $\mu(\Gamma)\geq n-k$. Then:

\begin{itemize}

\item $k=5$ if and only if the complement of $\Gamma$ is planar.

\item $k=4$ if and only if the complement of $\Gamma$ is outerplanar.

\end{itemize}

\end{thm}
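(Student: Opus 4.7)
The strategy is to follow Kotlov–Lov\'asz–Vesztergombi~\cite{KLV97}, which establishes a duality between $\mu(\Gamma)$ and $\mu(\bar{\Gamma})$ for twin-free graphs that reduces both statements to the planarity and outerplanarity characterizations of $\mu$ from the preceding theorem.

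For the forward direction (planarity or outerplanarity of $\bar{\Gamma}$ implies the stated lower bound on $\mu(\Gamma)$), the key input is the inequality
\[
\mu(\Gamma) + \mu(\bar{\Gamma}) \geq n - 2
\]
for twin-free $\Gamma$ on $n$ vertices. To prove it, I would take a matrix $\bar{M}$ realizing $\mu(\bar{\Gamma})$ and construct a matrix $M$ of the form $\alpha J + \beta I - \bar{M}$, where $J$ is the all-ones matrix and $\alpha,\beta$ are chosen so that the off-diagonal sign structure matches the edges of $\Gamma$. After a further spectral shift ensuring exactly one negative eigenvalue and the same kernel dimension, the corank of $M$ bounds $\mu(\Gamma)$ from below. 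Specializing to $\mu(\bar{\Gamma}) \leq 3$ (planar) yields $\mu(\Gamma) \geq n - 5$, and specializing to $\mu(\bar{\Gamma}) \leq 2$ (outerplanar) yields $\mu(\Gamma) \geq n - 4$.

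For the reverse direction, the inequality alone does not suffice, since it only lower-bounds $\mu(\bar{\Gamma})$. Here I would use the geometric interpretation of $\mu$: a matrix $M$ of corank $d = \mu(\Gamma)$ yields a $d$-dimensional kernel, and the strong Arnold property together with the sign pattern produces a Colin de Verdi\`ere-style vector representation of $\bar{\Gamma}$. When $d \geq n-5$, this representation compresses the edges of $\bar{\Gamma}$ into a space of dimension at most $3$, from which a planar embedding of $\bar{\Gamma}$ can be extracted (via standard arguments relating low-dimensional faithful representations to topological embeddings). The outerplanar case is analogous with a tighter dimension bound, using $d \geq n-4$.

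The main obstacle, in both directions, is maintaining the strong Arnold property, i.e.\ the transversality condition in the definition of $\mu$. In the forward direction it must be verified for the constructed matrix $M$; in the reverse direction it is needed to ensure that the kernel representation of $\bar{\Gamma}$ is geometrically faithful. The twin-free hypothesis enters at exactly this point: twin vertices would create parallel rows and columns in $M$, producing degeneracies that violate the Arnold transversality and that can increase $\mu$ without affecting the planarity class of $\bar{\Gamma}$. Ruling out such degeneracies via the twin-free assumption is what closes the argument.
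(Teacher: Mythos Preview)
The paper does not actually prove this theorem: it appears in the background section (Subsection~\ref{ss:cdv}) as a known result quoted from the literature, with the reference ``See~\cite{KLV97}'' immediately following it. There is therefore no proof in the paper to compare your proposal against.

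Your outline is broadly faithful to the strategy of the original source, Kotlov--Lov\'asz--Vempala~\cite{KLV97} (note: the third author is Vempala, not Vesztergombi). The inequality $\mu(\Gamma)+\mu(\bar\Gamma)\geq n-2$ for twin-free graphs and the sphere/vector representation coming from the kernel of an optimal $M$ are indeed the two pillars of that paper, and your sketch captures both. One caution: your description of the forward-direction construction as $\alpha J+\beta I-\bar M$ is a simplification; the actual argument in~\cite{KLV97} is more delicate, passing through sphere representations and a careful perturbation to secure both the eigenvalue condition and the strong Arnold property simultaneously. Simply shifting $\bar M$ by a rank-one plus scalar matrix does not in general preserve the Arnold transversality, so if you intend to write this up you should follow the geometric route in~\cite{KLV97} rather than the purely algebraic shift you describe.
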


The previous result can be refined further. For example, the ``only if" implications
are true without assuming any conditions on the vertices. Moreover,
there are also (weaker) characterizations of the complement being a linear forest or linklessly embeddable
in $\mathbb{R}^3$, also using the Colin de Verdi\`{e}re invariant. See \cite{KLV97}.



\subsection{Right-angled Artin groups and formality}
As was noted already, the isomorphism type of $A(\gam)$ determines the isomorphism type of the defining graph $\gam$.
In~\cite{KoberdaGAFA}, a proof was given that passed through the cohomology rings of right-angled Artin groups; cf.~\cite{koberda21survey}.
That is, the cohomology of the right-angled Artin group up to dimension two, together with the cup product pairing, determines the isomorphism
type of the underlying graph $\gam$. This fact fits into a much broader theory of formality, and in particular $1$--formality. This is a phenomenon
closely related to the de Rham fundamental group, quadratic presentability of the Mal'cev algebra,
and the minimal $1$--model; see~\cite{ABCKT}. For general Artin groups, $1$--formality
is a consequence of the work of Kapovich and Millson~\cite{KapMill}. Categorical perspectives on right-angled Artin groups and their
defining graphs are investigated by Grossack~\cite{grossack}.



\subsection{Effectiveness, automaticity, and computation}\label{ss:biautomatic}

For a general finitely presented group, one can algorithmically recover the cohomology ring structure in degree one (i.e. products of elements in degree one),
using a standard $5$--term exact sequence
from group cohomology (arising from the Lyndon--Hochschild--Serre Spectal Sequence; cf.~\cite{Brown-coho}).
Explicitly, if $G$ is a finitely generated group and $H$ is its abelianization, then one can compute the
kernel of the cup product map \[\bigwedge^2 H\longrightarrow H^2(G)\] via the lower central series. In particular, it can be decided which products in $\bigwedge^2$ are trivial or not. Thus, from any generating set for the
first cohomology of a right-angled Artin group, one can recover the structure of the associated cohomology basis graph without appealing to the duals of Artin generators. If we consider coefficients in $\mathbb{F}_2$ and generating sets whose cardinality is the rank of the group, this method and Corollary~\ref{cor:minimal-edge} provide an effective way of reconstructing $\gam$ from the data of
$\haf$, and assuming all queries take a unit amount of time, one can construct a naive algorithm to compute $\gam$ whose complexity is
 $O(e^{|V(\gam)|^2})$. The complexity of recovering $\gam$ from an arbitrary finite presentation of $A(\gam)$ is a somewhat
different matter.


We remark that in general, computing the second cohomology of a finitely presented group is difficult. Indeed, one can
bootstrap the unsolvability of the word problem in general finitely presented groups to prove that it is generally undecidable whether or not
$H^2(G,\Z)=0$; cf.~\cite{gordon-coho}.

For a right-angled Artin group, these pathologies do not occur. Indeed, right-angled Artin groups are \emph{biautomatic};
cf.~\cite{CharneyGD,deligne,brieskorn-saito,charney-automation,charney-biautomatic}.
The exact definition
of this property is irrelevant for our purposes, and we direct the reader to the seminal text~\cite{automatic-group}.

The property of automatic or biautomatic does not depend on the underlying presentation, though passing between different automatic
structures can be mysterious. For a finitely presented group which is known to be biautomatic, practically finding a biautomatic structure
is not always clear.
From a given biautomatic structure on a group $G$, it is a theorem of Bridson--Reeves~\cite{BridsonReeves} that there is
an algorithm to construct a finite dimensional approximation to a classifying space for $G$ (i.e.~a finite dimensional skeleton of $BG$).
Thus, for an arbitrary finitely presented group
which is abstractly isomorphic to a right-angled Artin group, there is an algorithm which computes all of $H^*(G,\Z)$.

Observe that any presentation of a right-angled Artin group defines a basis for the cohomology, and then the arguments above allow us to directly compute the cohomology basis graph.
Any bound on the Colin de Verdi\`{e}re invariant for this graph immediately gives the same bound for the defining graph.
In particular, we obtain a planarity test for the defining graph whose input is any presentation of the right-angled Artin group.
See Example \ref{ex2}.




\section{$\Gamma$--null-connectedness and the proof of the main results}
\label{sect:null-connected}

The ideas we use to establish the main result expand and generalise the constructions developed by the first three authors in~\cite{FKK-hamilton},
which in turn are partially inspired by
classical expansions of the determinant relying on the computation of $2\times 2$ minors, such as Laplace
expansion, the Dodgson condensation formula,
and the Sylvester formula~\cite{MR3208413}.

\subsection{Null-connectedness}
For the remainder of this section, we fix a finite simple graph $\gam$, with vertices $\{1,\ldots,n\}$, as well as $A\in\mathrm{M}_n(\bF)$.
The indices in the labelling of columns of $A$ will be identified with the vertices of $\gam$.

Recall that we write row vectors of $A$ as $a_r=(a_r^1,\ldots,a_r^n)$. We say two rows $a_r$ and $a_s$ of $A$ are \emph{$\gam$--null-connected}
if the minor \[\begin{pmatrix}a_r^i&a_r^j\\ a_s^i&a_s^j\end{pmatrix}\] is singular whenever $\{i,j\}$ is an edge of $\gam$.


A submatrix $M$ of $A$ will be called a \emph{$\gam$--$1$-block} if the following conditions are satisfied.
\begin{enumerate}
\item
$M$ has at least two rows and two columns.
\item
All entries of $M$ are nonzero.
\item
The indices of the columns occurring in $M$ span a connected subgraph of $\gam$.
\item
The rows of $A$ which meet $M$ span a connected graph with the $\gam$--null-connectedness adjacency relation.
\item
$M$ is maximal with respect to these conditions, in the sense that there is no submatrix $N$ of $A$ which properly contains $M$ and which
satisfies the previous conditions.
\end{enumerate}

A \emph{$\gam$--$1$-minor} is a minor of $A$ with at least two rows, and which is contained in a $\gam$--$1$-block.

\begin{lem}\label{lem:row-space}
Let $M$ be a $\gam$--$1$-block in $A$. Then the row space of $M$ is one--dimensional.
\end{lem}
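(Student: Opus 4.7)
The plan is to show that any two rows of $M$ are scalar multiples of each other, which immediately yields that the row space has dimension one (it is nonzero since all entries of $M$ are nonzero by condition (2), so not rank zero).

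First I would fix two row indices $r, s$ that appear in $M$. By condition (4), there is a path $r = r_0, r_1, \ldots, r_\ell = s$ in which each consecutive pair $r_p, r_{p+1}$ is $\gam$--null-connected. It therefore suffices to prove the following local claim: if $r_p$ and $r_{p+1}$ are $\gam$--null-connected rows appearing in $M$, then their restrictions to the columns of $M$ are proportional as vectors. Transitivity of proportionality for nonzero vectors (guaranteed by condition (2)) then chains these local equalities into a global one.

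For the local claim, let $C$ denote the set of column indices of $M$, and let $i, j \in C$ be such that $\{i,j\}$ is an edge of $\gam$. By the $\gam$--null-connectedness of $r_p$ and $r_{p+1}$, the $2\times 2$ minor on columns $i, j$ is singular, so
\[
a_{r_p}^{\,i}\, a_{r_{p+1}}^{\,j} \;=\; a_{r_p}^{\,j}\, a_{r_{p+1}}^{\,i}.
\]
Because all entries of $M$ are nonzero, we may rewrite this as $a_{r_p}^{\,i}/a_{r_{p+1}}^{\,i} = a_{r_p}^{\,j}/a_{r_{p+1}}^{\,j}$. Now condition (3) says that $C$ spans a connected subgraph of $\gam$; for any two columns $i, j \in C$ we can therefore walk from $i$ to $j$ along edges of $\gam$ lying inside $C$, and along each edge the ratio $a_{r_p}^{\,\cdot}/a_{r_{p+1}}^{\,\cdot}$ is preserved. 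Hence this ratio is a single constant $\lambda \in \bF^\times$, and the restriction of $r_p$ to $C$ equals $\lambda$ times the restriction of $r_{p+1}$ to $C$, as required.

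The only real thing to keep an eye on is that each step of the chain remains well-defined, which is exactly where the hypothesis that every entry of $M$ is nonzero is doing its work: it lets us freely invert entries both to extract the proportionality ratio on an edge and to propagate that ratio along a path in $\gam$ through $C$. No further use of the maximality clause (5) is required for this lemma; maximality will matter in the subsequent applications that build on this one-dimensionality.
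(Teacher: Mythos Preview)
Your proof is correct and follows essentially the same approach as the paper's own argument: both reduce to showing that two $\gam$--null-connected rows of $M$ are proportional on the columns of $M$ by propagating the scalar along edges of the connected subgraph spanned by those columns (using that all entries are nonzero), and then chain this through the connected $\gam$--null-connectedness graph on the rows. Your remark that maximality (condition~(5)) is not used here is accurate and matches the paper, which likewise makes no appeal to it in this lemma.
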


\begin{proof}
Clearly the row space of $M$ is at least one--dimensional. Let $\{i,j\}$ be indices of columns in $M$ that span an edge of $\gam$, and let
$a_1$ and $a_2$ be null-connected rows of $A$. We have that the matrix \[\begin{pmatrix}a_1^i&a_1^j\\ a_2^i&a_2^j\end{pmatrix}\] is singular
and has only nonzero entries, whereby
it follows that the vector $(a_2^i,a_2^j)$ is a nonzero scalar multiple $\lambda_{i,j}$
of $(a_1^i,a_1^j)$. By definition, the indices of the columns meeting $M$ span a
connected subgraph $\Lambda\subset\gam$. For every edge of $\Lambda$ spanned by vertices
$\{s,t\}$, we obtain a nonzero scalar $\lambda_{s,t}$ relating $(a_2^s,a_2^t)$ and $(a_1^s,a_1^t)$. Moreover, if two
edges of $\Lambda$ share a vertex then the two corresponding scalars must coincide.
It follows by induction on the diameter
of the subgraph $\Lambda$ and from the fact
that all entries in $M$ are nonzero that
the scalars $\lambda_{s,t}$ depend only on $a_1$ and $a_2$ and not on the indices $s$ and $t$ of the columns.
It follows that the two rows $a_1$ and $a_2$ of $M$ are scalar multiples of each
other. Since the rows of $M$ span a connected graph under the null-connectivity relation, we see that any two rows of $M$ are scalar
multiples of each other, the desired conclusion.
\end{proof}

The following property of $\gam$--$1$-blocks is crucial for canonical sorting of summands making up the determinant. Again, a similar
statement and proof are found as Lemma 2.10 in~\cite{FKK-hamilton}.

\begin{lem}\label{lem:disjoint}
Let $M_1$ and $M_2$ be two distinct $\gam$--$1$-blocks of $A$. Then $M_1$ and $M_2$ are disjoint as submatrices of $A$.
\end{lem}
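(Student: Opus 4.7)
The plan is to proceed by contradiction. Suppose $M_1$ and $M_2$ are distinct $\gam$--$1$-blocks of $A$ that share at least one entry. Writing $R_i$ for the set of row indices and $C_i$ for the set of column indices of $M_i$, sharing an entry means precisely that $R_1 \cap R_2 \neq \emptyset$ and $C_1 \cap C_2 \neq \emptyset$. I will show that the submatrix $M$ of $A$ with row index set $R_1 \cup R_2$ and column index set $C_1 \cup C_2$ itself satisfies conditions (1)--(4) in the definition of a $\gam$--$1$-block. Since $M_1 \neq M_2$, at least one of the inclusions $R_1 \subseteq R_1 \cup R_2$ or $C_1 \subseteq C_1 \cup C_2$ is strict, so $M$ strictly contains $M_1$, violating the maximality condition (5) for $M_1$.

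Conditions (1), (3), and (4) are essentially free. For (3), the column indices of $M_i$ span a connected subgraph $\Lambda_i \subseteq \gam$ by hypothesis; picking any $c \in C_1 \cap C_2$ shows $\Lambda_1 \cup \Lambda_2$ is connected and contained in the subgraph spanned by $C_1 \cup C_2$. Condition (4) is analogous, using any $r \in R_1 \cap R_2$ to glue the null-connectedness graphs on $R_1$ and $R_2$. Condition (1) is immediate since $M \supseteq M_1$.

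The only real work is condition (2): every entry of $M$ must be nonzero. The potentially problematic entries are $a_r^c$ with $r \in R_1 \setminus R_2$ and $c \in C_2 \setminus C_1$ (and symmetrically). This is the main obstacle, and my approach is as follows. Fix $r_0 \in R_1 \cap R_2$ and $c_0 \in C_1 \cap C_2$; then every entry of $a_{r_0}$ indexed by $C_1 \cup C_2$ is nonzero, since $a_{r_0}$ meets both blocks. By Lemma~\ref{lem:row-space} applied to $M_1$, there is a nonzero scalar $\lambda$ with $a_r^{c_0} = \lambda\, a_{r_0}^{c_0}$. To transport this identity from $c_0$ to $c$, I choose a path $c_0 = w_0, w_1, \ldots, w_k = c$ in $\Lambda_1 \cup \Lambda_2$ (which exists by (3)), so each $\{w_j, w_{j+1}\}$ is an edge of $\gam$ and each $w_j$ lies in $C_1 \cup C_2$. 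The null-connectedness of $a_r$ and $a_{r_0}$ forces
\[ a_r^{w_j}\, a_{r_0}^{w_{j+1}} \;=\; a_r^{w_{j+1}}\, a_{r_0}^{w_j}. \]
Since $a_{r_0}^{w_j} \neq 0$ for all $j$ (the path stays in $C_1 \cup C_2$), an easy induction starting from the nonzero value $a_r^{w_0} = \lambda\, a_{r_0}^{w_0}$ yields $a_r^{w_j} = \lambda\, a_{r_0}^{w_j}$ for every $j$, and in particular $a_r^c \neq 0$. The symmetric argument handles $r \in R_2 \setminus R_1$, $c \in C_1 \setminus C_2$. This establishes (2), completes the verification that $M$ is a $\gam$--$1$-block containing $M_1$ properly, and yields the desired contradiction.
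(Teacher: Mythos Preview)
Your approach is essentially the paper's: assume a shared entry, use the path-transport idea underlying Lemma~\ref{lem:row-space} to spread nonvanishing across columns, and then invoke maximality. There is, however, one genuine gap. You write that ``the null-connectedness of $a_r$ and $a_{r_0}$'' forces the $2\times 2$ minor identity along your column path. But condition~(4) in the definition of a $\gam$--$1$-block only says that the rows meeting $M_1$ span a \emph{connected} graph under the $\gam$--null-connectedness relation; it does not say that any two such rows are themselves $\gam$--null-connected. For $r\in R_1\setminus R_2$ and your fixed $r_0\in R_1\cap R_2$, all you are given is a chain $a_r=a_{p_0},a_{p_1},\ldots,a_{p_m}=a_{r_0}$ with each $p_i\in R_1$ and with consecutive rows $\gam$--null-connected. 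You cannot directly invoke the minor identity between $a_r$ and $a_{r_0}$ for an edge $\{w_j,w_{j+1}\}$ once the path leaves $C_1$.

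The repair is the chain argument that the paper carries out explicitly: run your column-path transport once for each consecutive pair $(a_{p_{i+1}},a_{p_i})$, starting at $a_{p_m}=a_{r_0}$ (whose entries on $C_1\cup C_2$ are all nonzero) and propagating back to $a_{p_0}=a_r$. At step $i$ you know inductively that $a_{p_{i+1}}^{j}\neq 0$ for all $j\in C_1\cup C_2$, and $a_{p_i}^{c_0}\neq 0$ since $p_i\in R_1$ and $c_0\in C_1$; the direct $\gam$--null-connectedness of $a_{p_i}$ and $a_{p_{i+1}}$ then lets you walk along your column path exactly as you wrote, giving $a_{p_i}^{j}\neq 0$ on all of $C_1\cup C_2$. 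With this inserted your argument is complete and matches the paper's. (One cosmetic point: what you actually produce is a submatrix satisfying (1)--(4) that properly contains $M_1$; this already violates (5), so there is no need to claim that $M$ is itself a $\gam$--$1$-block.)
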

\begin{proof}
Let $I\subset \{1,\ldots,n\}$ be the set of columns meeting $M_1$ and $J\subset\{1,\ldots,n\}$ be the set of columns meeting $M_2$,
and let $\{\ell_1,\ldots,\ell_s\}$ be the indices of the rows in $M_2$. Write $\Lambda_I$ and $\Lambda_J$ for the corresponding connected
subgraphs of $\Gamma$.
Suppose $a_{\ell_1}^{k}$ is an entry appearing in both $M_1$ and $M_2$. We will show that in this case $M_1=M_2$.

By definition, for all $i\in I$ we have that $a_{\ell_1}^i$ is a nonzero entry of $M_1$, and that $a_{\ell_m}^k$ is a nonzero entry of $M_2$.
By Lemma~\ref{lem:row-space}, the row spaces of both $M_1$ and $M_2$ are $1$--dimensional.
We have $k\in I\cap J$ and $a_{\ell_1}$ is a row meeting both $M_1$ and $M_2$.

If $a_{\ell_m}$ is another row meeting $M_2$ then $a_{\ell_1}$ and $a_{\ell_m}$ lie in the same $\gam$-null-connected component of the
rows of $A$. Suppose first that they are actually $\gam$-null-connected. Then we have that
the matrix
\[\begin{pmatrix}a_{\ell_1}^{k}&a_{\ell_1}^i\\ a_{\ell_m}^k &a_{\ell_m}^i\end{pmatrix}\] is necessarily singular, and consists of all nonzero
entries since $a_{\ell_m}^k\neq 0$. By the same argument as in Lemma~\ref{lem:row-space} (using the connectivity of
$\Lambda_I$), the rows $a_{\ell_1}^I$ and $a_{\ell_m}^I$,
consisting of entries in the columns indexed by $I$, are nonzero scalar multiples of each other. Since the rows of $M_2$ span a connected
graph under the $\gam$-null-connectivity relation, we obtain that $a_{\ell_1}^I$ and $a_{\ell_m}^I$ are nonzero scalar multiples
of each other for all $1\leq m\leq s$. By the maximality condition on $M_2$ and the connectivity of $\Lambda_I$ and $\Lambda_J$,
we have that $I\subseteq J$. By symmetry, $I=J$.
By the maximality conditions on $M_1$ and $M_2$, we obtain that $M_1=M_2$, the desired conclusion.
\end{proof}

Lemma~\ref{lem:disjoint} allows us to canonically partition the entries of a matrix into three different types:
\begin{enumerate}
\item
Nonzero entries that lie in a $\gam$--$1$-block.
\item
Nonzero entries that do not lie in a $\gam$--$1$-block.
\item
Zero entries.
\end{enumerate}

Next, we need to generalize to this context the notion of \emph{$\gam$--$1$-track} defined in \cite{FKK-hamilton}. By definition, this is a sequence
$\{A_1,\ldots,A_k\}$ of minors of $A$ with the following properties.
\begin{enumerate}
\item
For each $i$, the minor $A_i$ is either a $1\times 1$ submatrix or a $\gam$--$1$-minor.
\item
Each column of $A$ meets exactly one $A_i$.
\item
For $i\neq j$, it is not the case that $A_i$ and $A_j$ belong to a common $\gam$--$1$-minor.
\end{enumerate}

Two $\gam$--$1$-tracks are said to be \emph{different} if they consist of different sets of submatrices of $A$.

The preceding definitions serve to sort the summands that make up the determinant of the matrix $A$, the latter of which is simply viewed as
a signed combination of products of matrix entries. We write $\mathfrak a=(a_{\sigma(1)}^1,\ldots,a_{\sigma(n)}^n)$ for an arbitrary
string of nonzero entries of $A$. Such a string \emph{belongs} to a $\gam$--$1$-track $\{A_1,\ldots,A_k\}$ if for all $i$ there exists a $j$
such that $a_{\sigma(i)}^i$ is an entry of $A_j$.

\begin{lem}\label{lem:unique-track}
Let $\mathfrak a=(a_{\sigma(1)}^1,\ldots,a_{\sigma(n)}^n)$ be a string of nonzero entries of $A$. Then $\mathfrak a$ belongs to a unique
$\gam$--$1$-track in $A$.
\end{lem}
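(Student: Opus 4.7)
The plan is to prove existence and uniqueness separately, both leveraging Lemma~\ref{lem:disjoint} in a fundamental way. I begin with two preliminary observations. First, by Lemma~\ref{lem:disjoint}, every nonzero entry of $A$ lies in at most one $\gam$--$1$-block, so the entries of $\mathfrak{a}$ admit a canonical partition according to which block (if any) contains them. Second, in any $\gam$--$1$-track $\{A_1,\ldots,A_k\}$ to which $\mathfrak{a}$ belongs, the column index sets $C_i$ partition $\{1,\ldots,n\}$ by condition 2 of the definition, and the row index sets $R_i$ must partition $\{1,\ldots,n\}$ as well: each $A_i$ is square, so $\sum_i|R_i|=\sum_i|C_i|=n$, while the fact that each entry $a_{\sigma(i)}^i$ lies in some $A_j$ places $\sigma(i)$ into the corresponding $R_j$; since $\sigma$ is a permutation every row index is covered, and combined with the count equality this forces pairwise disjointness of the $R_i$.

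For existence, I would define a candidate track by grouping the entries of $\mathfrak{a}$ according to the blocks. Each entry of $\mathfrak{a}$ not lying in any $\gam$--$1$-block becomes its own $1\times 1$ submatrix. For each $\gam$--$1$-block $B$ meeting $\mathfrak{a}$, the entries of $\mathfrak{a}$ lying in $B$ determine a square submatrix of $A$ whose row and column index sets are precisely the rows and columns occurring among these entries; the two sets have the same size because $\sigma$ is a permutation. This submatrix is contained in $B$ and is either a $\gam$--$1$-minor (when $B$ contains at least two entries of $\mathfrak{a}$) or a $1\times 1$ submatrix. Conditions 1 and 2 of the track definition are immediate, and for condition 3 any two distinct members of the constructed collection either come from disjoint blocks or include an entry lying in no block, in which case Lemma~\ref{lem:disjoint} prevents them from lying in a common $\gam$--$1$-minor, since every $\gam$--$1$-minor is confined to a single block.

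For uniqueness, suppose $\{A'_1,\ldots,A'_{k'}\}$ is another $\gam$--$1$-track to which $\mathfrak{a}$ belongs. For each entry $a_{\sigma(i)}^i$ lying in no block, the containing submatrix $A'_j$ cannot be a $\gam$--$1$-minor without forcing the entry into a block, so $A'_j=\{a_{\sigma(i)}^i\}$ matches the candidate; and for an entry lying in a block $B$, Lemma~\ref{lem:disjoint} forces the containing $A'_j$ to be contained in $B$. The crux of uniqueness is then to rule out a proper splitting of the entries of $\mathfrak{a}$ lying in $B$ between two distinct submatrices $A'_{j_1}$ and $A'_{j_2}$ both inside $B$. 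Here the row-partition observation is decisive: the row and column index sets of $A'_{j_1}$ and $A'_{j_2}$ are disjoint, so the submatrix of $A$ on the union of their rows and the union of their columns is square, is contained in $B$, and has at least two rows, hence is a $\gam$--$1$-minor common to both, contradicting condition 3. I expect this no-splitting step to be the main obstacle, because without the partition of rows one has no immediate way to force the candidate common $\gam$--$1$-minor to be square.
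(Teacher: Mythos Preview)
Your argument is correct and follows essentially the same approach as the paper: build the track by grouping the entries of $\mathfrak a$ according to the $\gam$--$1$-blocks (via Lemma~\ref{lem:disjoint}), treat the remaining entries as $1\times 1$ pieces, and then argue uniqueness. The paper's proof of uniqueness is a single line (``the constituents are canonically defined and hence unique''); your version is more careful, explicitly using the row-partition observation and condition~(3) to rule out splitting the entries in a block among several track members, which is exactly what ``canonicity'' is hiding.
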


\begin{proof}
For $B$ a $\gam$--$1$-block, we let $\{b_1,\ldots,b_s\}$ be the (possibly empty) set of entries of $\mathfrak a$ which lie in $B$.
Each row and each column of $A$ contains exactly one entry of $\mathfrak a$, and so $\{b_1,\ldots,b_s\}$ defines a $\gam$--$1$-minor $A_B$
in $B$ of dimension exactly $s$. By construction, for distinct $\gam$--$1$-blocks $B_1$ and $B_2$, the $\gam$--$1$-minors $A_{B_1}$ and
$A_{B_2}$ are disjoint. The remaining entries of $\mathfrak a$, say $\{c_1,\ldots,c_t\}$, are $1\times 1$ nonzero matrices that belong to no
$\gam$--$1$-block. Thus, the $\gam$--$1$-track associated to $\mathfrak a$ is \[\{A_B\}_{B\in\mathcal B}\cup \{c_1,\ldots,c_t\},\] where
here $\mathcal B$ ranges over all $\gam$--$1$-blocks of $A$. The disjointness of distinct $\gam$--$1$-blocks guarantees that this is
actually a $\gam$--$1$-track.

It is clear that this $\gam$--$1$-track in $A$ is unique. Indeed, the constituents $\{c_1,\ldots,c_t\}$
and $\{A_B\}_{B\in\mathcal B}$ are canonically defined and hence unique.
\end{proof}

Recall the Leibniz formula for the determinant of an $n\times n$ matrix, namely \[\det A=\sum_{\sigma\in S_n}
\sgn(\sigma)\prod_{i=1}^n a_{\sigma(i)}^i.\]

For a given $\gam$--$1$-track $\T$ of $A$, we write $(\det A)_{\T}$ for the restriction of the sum defining the determinant to permutations
$\sigma$ such that $\mathfrak a=(a_{\sigma(1)}^1,\ldots,a_{\sigma(n)}^n)$ belongs to $\T$.

\begin{lem}\label{lem:zero-det}
Let $\T$ be a $\gam$--$1$-track of $A$.
Suppose $\T$ contains a minor of dimension at least two. Then \[(\det A)_{\T}=0.\]
\end{lem}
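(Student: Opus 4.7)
The plan is to reduce $(\det A)_{\T}$ to a product of determinants of the $\gam$--$1$-minors and $1\times 1$ blocks making up $\T$, and then to observe that any $\gam$--$1$-minor of dimension at least two has vanishing determinant courtesy of Lemma~\ref{lem:row-space}.

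First, I would unpack what it means for a string $\mathfrak a=(a_{\sigma(1)}^1,\ldots,a_{\sigma(n)}^n)$ to belong to $\T=\{A_1,\ldots,A_k\}$. Let $C_j$ and $R_j$ denote the column and row index sets of $A_j$, respectively; these are subsets of $\{1,\ldots,n\}$ of the same cardinality since each $A_j$ is a square minor. The condition that each column of $A$ meets exactly one $A_i$ implies that the sets $C_j$ partition $\{1,\ldots,n\}$. For $\mathfrak a$ to belong to $\T$, every entry $a_{\sigma(i)}^i$ must lie in some $A_j$, and since column $i$ lies in the unique $A_{j(i)}$, we must have $\sigma(i)\in R_{j(i)}$. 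As $\sigma$ is a bijection, this forces $\sigma$ to restrict to a bijection $\sigma_j\colon C_j\to R_j$ for each $j$, and in particular the $R_j$'s also partition $\{1,\ldots,n\}$.

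Next I would establish the factorization. Any $\sigma$ satisfying the condition above is determined by its restrictions $\sigma_j$, and conversely any collection of bijections $\sigma_j\colon C_j\to R_j$ assembles to such a $\sigma$. Fixing order-preserving reference bijections $\alpha_j\colon C_j\to R_j$, each $\sigma_j$ determines a permutation $\tau_j=\alpha_j^{-1}\circ\sigma_j$ of $C_j$, and a standard sign computation (writing $\sigma$ as the product of the ``block-diagonal'' permutation determined by the $\alpha_j$'s and the block-independent permutations $\tau_j$) yields $\sgn(\sigma)=\varepsilon_{\T}\cdot\prod_j\sgn(\tau_j)$ for a fixed sign $\varepsilon_{\T}\in\{\pm1\}$ depending only on $\T$. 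Grouping terms by blocks then gives
\[(\det A)_{\T}=\varepsilon_{\T}\prod_{j=1}^{k}\det(A_j).\]

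Finally, assume $\T$ contains a minor $A_{j_0}$ of dimension at least two. By definition this $A_{j_0}$ is a $\gam$--$1$-minor, so it is contained in some $\gam$--$1$-block $M$. Lemma~\ref{lem:row-space} tells us that the row space of $M$ is one--dimensional, hence any two rows of $M$ are scalar multiples of one another. The rows of $A_{j_0}$ are obtained from rows of $M$ by restricting to the columns indexed by $C_{j_0}$, so they too are pairwise scalar multiples; since $A_{j_0}$ is square of dimension at least two, this forces $\det(A_{j_0})=0$. The factorization above then gives $(\det A)_{\T}=0$.

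The only genuinely delicate point is the sign bookkeeping in the factorization step; once the partition of both rows and columns induced by $\T$ is recognized, the rest is formal, and the geometric content of the lemma is entirely carried by Lemma~\ref{lem:row-space}.
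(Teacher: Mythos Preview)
Your argument is correct, and it takes a genuinely different (though closely related) route from the paper's. The paper does not establish the full block factorization $(\det A)_{\T}=\varepsilon_{\T}\prod_j\det(A_j)$; instead it picks a single minor $M$ of dimension $k\geq 2$ in $\T$ and lets $S_k$ act on the strings belonging to $\T$ by permuting the row indices occurring in $M$. Using Lemma~\ref{lem:row-space}, any two strings in the same $S_k$--orbit have the same product of entries but their signs run over all of $\{\pm 1\}$ equally, so the orbit sums to zero. Your approach trades this orbit--cancellation trick for a clean structural statement (the generalized Laplace expansion along the track), at the cost of the sign bookkeeping you flag. One small point worth making explicit in your write-up: the claim that the row sets $R_j$ partition $\{1,\ldots,n\}$ is not part of the definition of a $\gam$--$1$-track, and you are tacitly using the existence of at least one nonzero string belonging to $\T$ to deduce it; if no such string exists then $(\det A)_{\T}=0$ vacuously, so this is harmless, but it should be said. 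Similarly, an arbitrary assembly of bijections $\sigma_j$ may give a string with a zero entry (hence not ``belonging'' to $\T$ in the paper's sense), but since such terms contribute zero to the determinant sum your factorization identity is unaffected.
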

\begin{proof}
Let $M$ be such a minor. Without loss of generality, $M$ consists of the top left $k\times k$ minor of $A$, and we may identify $S_k$ with
the group of permutations of the rows of $M$, and we extend these permutations by the identity. If $\mathfrak a$ belongs to $\T$ and
$\tau\in S_k$, then
so does the string $\mathfrak a^{\tau}$, which is obtained by applying $\tau$ to the row indices of $\mathfrak a$. Since the signature
of a permutation is a homomorphism, we have that the contribution of $\mathfrak a^{\tau}$ to $(\det A)_{\T}$ differs from that of $\mathfrak a$
by $\sgn (\tau)$. Since the row space of a $\gam$--$1$-block is one-dimensional, we have that the product of entries of $\mathfrak a$
and $\mathfrak a^{\tau}$ are equal. It is now immediate that $(\det A)_{\T}=0$, since exactly half the permutations in $S_k$
have signature $1$ and
half have signature $-1$.
\end{proof}

In the case of $\bF_2$, Lemma~\ref{lem:zero-det} could be proved by simply noting that if $\T$ contains a minor of dimension at least two
then an even number of distinct strings $\mathfrak a$ belong to $\T$.

\begin{cor}\label{cor:reordering}
Suppose $A$ is invertible. Then there exists a reordering of the rows of $A$ such that for all edges $\{i,j\}$ of $\gam$, we have
that $a_i$ and $a_j$ are not $\gam$--null-connected.
\end{cor}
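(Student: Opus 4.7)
The plan is to extract the desired reordering from a Leibniz expansion of $\det A$, grouped according to $\gam$-$1$-tracks. By Lemma~\ref{lem:unique-track}, every string $(a_{\sigma(1)}^1,\dots,a_{\sigma(n)}^n)$ of nonzero entries belongs to a unique $\gam$-$1$-track, so one can regroup the Leibniz sum as $\det A = \sum_{\T} (\det A)_{\T}$. Lemma~\ref{lem:zero-det} annihilates every track that contains a minor of dimension at least two. Since $A$ is invertible, at least one \emph{trivial} track $\T$ — one consisting entirely of $n$ separate $1\times 1$ minors — must contribute nontrivially, and such a $\T$ uniquely determines a permutation $\sigma\in S_n$ whose string belongs to it. I claim that permuting the rows of $A$ by $\sigma$ (new row $i$ $=$ old row $\sigma(i)$) is the required reordering.

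Two features of $\sigma$ are immediate from the choice of $\T$: (i) each $a_{\sigma(i)}^i$ is nonzero (otherwise $(\det A)_{\T}$ would vanish), and (ii) no two entries $a_{\sigma(p)}^p$, $a_{\sigma(q)}^q$ with $p\ne q$ lie in a common $\gam$-$1$-block, since any such common block would contain the $2\times 2$ submatrix on rows $\sigma(p),\sigma(q)$ and columns $p,q$ as a $\gam$-$1$-minor, violating condition~(3) in the definition of a track. To verify the conclusion I argue by contradiction: assume that, after the reordering, some edge $\{i,j\}\in E(\gam)$ has null-connected rows $a_i,a_j$. Specializing the null-connectedness condition to the edge $\{i,j\}$ itself, the $2\times 2$ minor with rows $\sigma(i),\sigma(j)$ and columns $i,j$ is singular. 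Its diagonal entries are nonzero by (i), so singularity forces the off-diagonal entries to be nonzero, producing a $2\times 2$ all-nonzero submatrix whose columns span an edge of $\gam$ and whose rows are null-connected — precisely the first four defining conditions of a $\gam$-$1$-block.

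The main obstacle I expect is closing the loop: realising this $2\times 2$ submatrix inside an honest (maximal) $\gam$-$1$-block $B$, so that the presence of both $a_{\sigma(i)}^i$ and $a_{\sigma(j)}^j$ in $B$ contradicts property~(ii). This will reduce to a finite maximality argument — successively adjoin rows or columns while maintaining the nonzero-entry, connected-column-index, and null-connected-row-index conditions, until no further extension is possible. Since $A$ is finite the process terminates, yielding the desired $\gam$-$1$-block and completing the contradiction.
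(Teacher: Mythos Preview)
Your proof is correct and essentially identical to the paper's: both decompose $\det A$ over $\gam$--$1$-tracks via Lemma~\ref{lem:unique-track}, kill the nontrivial tracks with Lemma~\ref{lem:zero-det}, and use that a singular $2\times 2$ minor with nonzero diagonal and edge-indexed columns sits inside a $\gam$--$1$-block. The only difference is cosmetic---the paper argues by global contradiction (assume no reordering works, conclude every nonzero string lies in a track with a minor of dimension at least two, hence $\det A=0$), whereas you argue constructively (pick a trivial track with nonzero contribution, extract $\sigma$, and verify it works); these are contrapositives of one another, and your final maximality step is in fact spelled out more carefully than the paper's one-line assertion.
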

\begin{proof}
We suppose the contrary and argue that $\det A=0$. Let $\mathfrak a=(a_{\sigma(1)}^1,\ldots,a_{\sigma(n)}^n)$ be a string of
nonzero entries of $A$. By assumption, there is an edge $\{i,j\}$ of $\gam$ which witnesses the fact that $\sigma$ fails to be
a suitable reordering. By reordering the rows by $\sigma^{-1}$, we may assume $\mathfrak a=(a_1^1,\ldots,a_n^n)$. We have that the
matrix \[M=\begin{pmatrix}a_i^i&a_j^i\\ a_i^j&a_j^j\end{pmatrix}\] is singular, because these rows must be $\gam$--null-connected. Since
the matrix $M$ has nonzero diagonal entries and is singular, it must lie inside of a $\gam$--$1$-block of $A$. It follows that the unique
$\gam$--$1$-track $\T$ to which $\mathfrak a$ belongs contains a $\gam$--$1$-minor of dimension at least two, and so $(\det A)_{\T}=0$.
The choice of $\mathfrak a$ was arbitrary, and so each such string belongs to a $\gam$--$1$-track $\T$ such that $(\det A)_{\T}=0$. Now,
the uniqueness of the track to which $\mathfrak a$ belongs implies that $\det A$ is the sum of $(\det A)_{\T}$, where $\T$ varies over
all possible $\gam$--$1$-tracks. The desired result now follows.
\end{proof}

\begin{proof}[Proof of Theorem~\ref{thm:main}]
We fix notation and write
$\{v^*_1,\ldots,v^*_n\}$ for the standard basis for $H^1(A(\gam),\bF)$, and we let $A\in \mathrm{GL}_n(\bF)$ be arbitrary. We let
$\B=\{w_1,\ldots,w_n\}$ be the result of applying $A$, viewed as a change of basis, so that \[w_i=\sum_{j=1}^n a_i^j v_j^*.\] We let $\sigma$
be a reordering of the rows of $A$ as guaranteed by Corollary~\ref{cor:reordering}, and we relabel the vectors $\{w_1,\ldots,w_n\}$
according to $\sigma$. Computing $w_i\smile w_j$, we see that this cup product is zero if and only if the rows $a_i$ and $a_j$ of
$A$ are $\gam$--null-connected. It follows that the cohomology basis graph $\G_{\B}$ contains $\gam$ as a subgraph,
as desired.
\end{proof}

Observe that Corollary \ref{cor:minimal-edge} is immediate from Theorem~\ref{thm:main}, as for an inclusion of graphs
$\Gamma'\subseteq\Gamma$ with the same number of vertices, equality in the number of
edges implies isomorphism. Theorem \ref{thm:cdv} is a consequence of Theorem~\ref{thm:main} and the minor-monotonicity of the Colin de Verdi\`{e}re invariant. As right-angled groups are biautomatic, Corollary \ref{cor:decidable} follows. Finally, Theorem \ref{cor:planar} is implied in turn by Theorem \ref{thm:cdv}, together with the properties of the invariant discussed in Section \ref{ss:cdv}.


We conclude by stating a property concerning complements, which is analogous to Theorem \ref{cor:planar}:

\begin{prop}
\label{prop:complement}

Let $\Gamma$ be a finite simple graph with no twin vertices. Then the complement of $\Gamma$ is planar (resp. outerplanar) if and only if for every basis  $\B$ of
$H^1(A(\gam),\bF)$, the complement of the cohomology basis graph $\GB$ is planar (resp. outerplanar).

\end{prop}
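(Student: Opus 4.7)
The plan is to deduce the proposition directly from Theorem \ref{thm:main}, essentially bypassing the Colin de Verdi\`ere machinery that underlies Theorem \ref{cor:planar}. Theorem \ref{thm:main} asserts that, after the appropriate identification of vertex sets, $\gam$ sits inside $\GB$ as a subgraph; since the two graphs have the same vertex set, complementation reverses this containment and yields $\overline{\GB}\subseteq\overline{\gam}$.

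For the forward implication, if $\overline{\gam}$ is planar (respectively outerplanar), then every subgraph of it inherits that property, since planarity and outerplanarity are both hereditary under passage to subgraphs. In particular, $\overline{\GB}$ has the asserted property for every basis $\B$.

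For the converse, I would apply the hypothesis to the distinguished basis $\B_0=\{v_1^*,\ldots,v_n^*\}$ produced by Lemma \ref{lem:coho}. With respect to $\B_0$, the nonvanishing of the cup product $v_i^*\smile v_j^*$ is precisely equivalent to $\{v_i,v_j\}\in E(\gam)$, so $\Gamma_{\B_0}$ coincides with $\gam$, and the assumed planarity (respectively outerplanarity) of $\overline{\Gamma_{\B_0}}$ is exactly the same property for $\overline{\gam}$.

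The no-twin hypothesis on $\gam$ does not actually enter this direct argument. It would be required in an alternative proof passing through Theorem \ref{thm:notwins} and the minor-monotonicity of $\mu$ (observing that $\gam$ is obtained from $\GB$ by deleting edges, so $\mu(\gam)\leq\mu(\GB)$), but such an approach would additionally demand control of twin vertices in $\GB$, which need not follow from the hypothesis on $\gam$. The main conceptual point is thus that Theorem \ref{thm:main} provides a subgraph containment and that the targeted properties are subgraph-monotone, so I anticipate no substantive obstacle beyond choosing the right argument.
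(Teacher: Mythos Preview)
Your argument is correct, but it follows a genuinely different route from the paper's own proof. The paper argues via Theorem~\ref{thm:notwins}: from the no-twin hypothesis and planarity of $\overline{\gam}$ it obtains $\mu(\gam)\geq n-5$; since $\gam$ is a minor of $\GB$, minor-monotonicity gives $\mu(\GB)\geq n-5$; and then the ``only if'' direction of Theorem~\ref{thm:notwins} (which, as the paper notes just after that theorem, holds without any twin restriction) yields planarity of $\overline{\GB}$. Your approach instead complements the subgraph inclusion $\gam\subseteq\GB$ from Theorem~\ref{thm:main} directly to get $\overline{\GB}\subseteq\overline{\gam}$ and then invokes the subgraph-heredity of (outer)planarity. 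This is strictly more elementary---it avoids the Colin de Verdi\`ere machinery entirely---and, as you rightly observe, it shows that the no-twin hypothesis is actually superfluous for the proposition as stated. The paper's route has the virtue of illustrating how the invariant $\mu$ organizes such statements uniformly, but your argument is both shorter and sharper for this particular result. The converse direction (taking the vertex-dual basis $\B_0$) is handled identically in both approaches.
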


\begin{proof}

We prove the case of planarity; the argument for outerplanarity is analogous.
By Theorem \ref{thm:notwins}, the complement of $\Gamma$ is planar if and only if $\mu(\Gamma)\geq n-5$, where $n=|V(\Gamma)|$. By the minor monotonicity of $\mu$, we see that $\mu(\GB)\geq n-5$
for every cohomology basis graph $\GB$. By Theorem \ref{thm:notwins}, the complement of every such $\GB$ is planar.
The other implication is immediate.
\end{proof}


\subsection{A reformulation in terms of edge ideals}
\label{sect:characterizations}
There are equivalent reformulations of the main Theorem \ref{thm:main} in other contexts, which to our knowledge were
also open questions; this was communicated to the authors by A.~Van Tuyl~\cite{tuyl-personal}.
 Here we discuss a perspective from commutative algebra, and in the next section from graph theory.

One fruitful context for investigating the interplay between combinatorics of graphs and commutative algebra is through \emph{clutters}
and \emph{monomial ideals}, and especially \emph{edge ideals}, which are in turn related to the theory of polyhedral products and
Stanley-Reisner rings; the reader is directed to~\cite{morey-villarreal,Ha-VanTuyl} for background.


We consider $k=\bF_2$, the field with two elements, and a polynomial ring $R=k[x_1,\ldots,x_n]$. We let $I\subset R$ be an ideal generated
by square-free monomials of degree two, and we let $\gam_I$ be the graph having $I$ as its edge ideal. A matrix $A\in\mathrm{GL}_n(k)$
determines a linear change of variables $x_i\mapsto w_i$ that preserves degrees of polynomials.
We let $I'$ be the \emph{complement} of $I$, which is to say that $I'$ is generated by all degree two monomials that do not lie in $I$, and
we let $\overline R=R/I'$.

One can now consider the ideal $J\subset k[w_1,\ldots,w_n]$ generated by products $w_iw_j$, with $i\neq j$, and its image $\overline J\subset
\overline R$. We write $\gam_J$ for the graph with vertices $\{w_1,\ldots,w_n\}$, and with an edge whenever the monomial $w_iw_j$
survives in $\overline J$. These graphs have been extensively studied in the literature, see for example \cite{vantuyl} and references therein.

 The following result can easily be seen to follow from Theorem~\ref{thm:main}:

\begin{cor}
The graph $\gam_I$ is a subgraph of $\gam_J$.
\end{cor}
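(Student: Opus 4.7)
The plan is to translate the commutative algebra setup into the cohomology of the right-angled Artin group $A(\gam_I)$ and then invoke Theorem~\ref{thm:main} directly; essentially the entire task is setting up the right dictionary.

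First, I would identify the low-degree piece of $\overline R = R/I'$ with $H^{\leq 2}(A(\gam_I),\bF_2)$. The ideal $I'$ contains $x_ix_j$ for every non-edge of $\gam_I$ and also every square $x_i^2$ (both are degree two monomials not lying in $I$), so the degree two component of $\overline R$ is spanned precisely by the monomials $\{x_ix_j : \{i,j\}\in E(\gam_I)\}$. By Lemma~\ref{lem:coho}, these match (up to signs that vanish over $\bF_2$) the basis $\{e_k^*\}$ of $H^2(A(\gam_I),\bF_2)$, and multiplication in $\overline R$ restricted to degrees one and two reproduces the cup product pairing $H^1(A(\gam_I),\bF_2)\otimes H^1(A(\gam_I),\bF_2)\to H^2(A(\gam_I),\bF_2)$. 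Under this identification the generators $x_1,\ldots,x_n$ of the degree one piece become the dual basis $\{v_1^*,\ldots,v_n^*\}$ of Lemma~\ref{lem:coho}.

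Next, I would observe that the invertible linear change of variables $x_i\mapsto w_i = \sum_j a_i^j x_j$ encoded by $A\in\GL_n(\bF_2)$ corresponds, under this dictionary, to a change of basis $\B=\{w_1,\ldots,w_n\}$ of $H^1(A(\gam_I),\bF_2)$. The monomial $w_iw_j$ survives in $\overline J\subset\overline R$ exactly when $w_iw_j$ is nonzero in $\overline R$, which, by the preceding identification, is the condition $w_i\smile w_j\neq 0$ in $H^2(A(\gam_I),\bF_2)$. Consequently the graph $\gam_J$ is literally the cohomology basis graph associated to $\gam_I$ and $\B$, in the notation of the paper it is $(\gam_I)_{\B}$.

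With these identifications in place, the corollary becomes a direct restatement of Theorem~\ref{thm:main} applied to the graph $\gam_I$ and the basis $\B$. I expect the main obstacle to be purely expository rather than mathematical: one must be careful to justify that the algebra structure of $\overline R$ in degrees at most two really coincides with the low-degree cup product algebra of $A(\gam_I)$ (including the vanishing of squares, which is where $I'$ must contain the $x_i^2$), so that the conclusion of Theorem~\ref{thm:main} translates cleanly back into the language of edge ideals and their quotients.
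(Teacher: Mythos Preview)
Your proposal is correct and is exactly the intended argument: the paper offers no detailed proof but simply asserts that the corollary ``can easily be seen to follow from Theorem~\ref{thm:main},'' and the translation you spell out---identifying the degree $\leq 2$ part of $\overline R$ with the cup-product algebra of $A(\gam_I)$ over $\bF_2$, so that $\gam_J$ becomes the cohomology basis graph $(\gam_I)_{\B}$---is precisely the dictionary implicit in the surrounding discussion. Your care with the squares $x_i^2\in I'$ is warranted and correct, since $I$ is generated by square-free monomials.
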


Conversely, observe that every finite simple graph can be represented by $\gam_I$ for some edge ideal. Moreover, every change of
basis can be effected
by an invertible matrix. Thus, if $\gam_I$ is always a subgraph of $\gam_J$ as above, then Theorem~\ref{thm:main} holds over a field of
characteristic two.

\subsection{A reformulation in classical graph theory}

Let $A\in \mathrm{GL}_n(\bF_2)$, and let $\Gamma$ be a fixed graph on $n$ vertices $\{v_1,\ldots,v_n\}$. We now define a new
finite simple graph with vertex set $\{e_1,\ldots,e_n,w_1,\ldots,w_n\}$. The edge relation is given as follows:

\begin{enumerate}
\item We place an edge between $e_i$ and $e_j$ precisely when there is an edge between $v_i$ and $v_j$.

\item We place an edge between $w_i$ and $e_j$ precisely when the entry $a_i^j$ of $A$ is nonzero.

\item For $i\neq j$, we place a new edge between $w_i$ and $w_j$ if and only if there exist indices $k$ and $\ell$ such that
the following conditions hold:
\begin{enumerate}
\item
There is an edge between $v_k$ and $v_{\ell}$.
\item
There is a path of length three between $w_i$ and $w_j$ that contains the edge between $e_k$ and $e_{\ell}$ induced by the previous condition and the two edges arising from (2) above, and there is no path of length two in the subgraph induced by $\{w_i,w_j,e_k,e_{\ell}\}$.
\end{enumerate}
\end{enumerate}

We write $\Gamma'$ for the graph spanned by $\{w_1,\ldots,w_n\}$, and let $\mathcal B$ be the basis for $H^1(A(\Gamma),\bF_2)$
induced by the rows of $A$; by construction, we may naturally identify elements of $\mathcal B$ with $\{w_1,\ldots,w_n\}$.
It is straightforward to see that $w_i$ and $w_j$ are adjacent in $\Gamma'$ if and only if $w_i$ and $w_j$ are adjacent in
$\Gamma_{\mathcal B}$: indeed, this can be seen from explicitly writing out the cup product in terms of the basis of $H^1(A(\Gamma),\bF_2)$
coming from the vertices of $\Gamma$; cf.~Subsection~\ref{ss:coho-raag}.
Thus, Theorem \ref{thm:main} is equivalent to:

\begin{cor}

The graph $\Gamma$ is a subgraph of $\Gamma'$.

\end{cor}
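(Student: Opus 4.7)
The plan is to derive this corollary as an immediate consequence of Theorem~\ref{thm:main} once the subgraph spanned by $\{w_1,\ldots,w_n\}$ in the construction is identified with an appropriate cohomology basis graph. Since $A\in\mathrm{GL}_n(\bF_2)$ is invertible, the rows of $A$, viewed via the formula $w_i=\sum_j a_i^j v_j^*$, define a basis $\mathcal B$ of $H^1(A(\Gamma),\bF_2)$. I would apply Theorem~\ref{thm:main} to this basis to obtain $\Gamma\subseteq\Gamma_{\mathcal B}$ as a subgraph, and then identify $\Gamma_{\mathcal B}$ with $\Gamma'$ to conclude.

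To make the identification $\Gamma'=\Gamma_{\mathcal B}$ precise, I would verify that the adjacency defined by condition (3) coincides with the condition of having a nontrivial cup product. Expanding by means of Lemma~\ref{lem:coho}, over $\bF_2$ we have
\[w_i\smile w_j=\sum_{\{k,\ell\}\in E(\Gamma),\, k<\ell}\bigl(a_i^k a_j^\ell+a_i^\ell a_j^k\bigr)\,e_{k\ell}^*,\]
so $w_i\smile w_j\neq 0$ if and only if there exists an edge $\{k,\ell\}\in E(\Gamma)$ for which the $2\times 2$ submatrix of $A$ on rows $i,j$ and columns $k,\ell$ is nonsingular over $\bF_2$. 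The three ingredients of condition (3)---an edge $e_k e_\ell$, a length-three path from $w_i$ to $w_j$ through that edge using the incidences from (2), and the absence of a length-two path in the subgraph induced by $\{w_i,w_j,e_k,e_\ell\}$---can be read as the combinatorial translation of this algebraic nonsingularity: the length-three path witnesses a nonzero diagonal product $a_i^k a_j^\ell$ or $a_i^\ell a_j^k$, while forbidding length-two paths rules out the simultaneous presence of the complementary pairing that would cancel the contribution over $\bF_2$.

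I expect the main obstacle to be this combinatorial-to-algebraic translation. It amounts to a careful case analysis over the possible nonzero patterns of each $2\times 2$ submatrix, matching each such pattern with the presence or absence of the length-two and length-three paths in the induced subgraph on $\{w_i,w_j,e_k,e_\ell\}$, and confirming that the existential quantifier over $k,\ell$ in the definition of $\Gamma'$ aligns with the existential condition for the nonvanishing of the cup product. Once this equivalence is checked, invoking Theorem~\ref{thm:main} for the basis $\mathcal B$ delivers $\Gamma\subseteq\Gamma_{\mathcal B}=\Gamma'$, completing the argument.
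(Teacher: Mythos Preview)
Your plan is exactly the paper's approach: the paper asserts that $\Gamma'=\Gamma_{\mathcal B}$ (calling this ``straightforward'') and then observes that the corollary is equivalent to Theorem~\ref{thm:main}, which is precisely what you propose to do.

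One caution about your heuristic for the identification step. Your claim that ``forbidding length-two paths rules out the simultaneous presence of the complementary pairing'' is not literally correct: a length-two path $w_i$--$e_k$--$w_j$ in the induced subgraph encodes $a_i^k=a_j^k=1$, which is not the same as the complementary diagonal $a_i^\ell a_j^k=1$. For instance, with $\Gamma=K_2$ and $A=\bigl(\begin{smallmatrix}1&1\\0&1\end{smallmatrix}\bigr)$ over $\bF_2$, the $2\times 2$ minor is nonsingular (so $w_1\smile w_2\neq 0$), yet $w_1$--$e_2$--$w_2$ is a length-two path. So the case analysis you anticipate will need a more careful reading of the paper's length-two condition than your sketch suggests; the overall strategy, however, matches the paper.
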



\section{Some examples}
\label{sect:examples}
In this section we describe explicit examples that we find illustrative for understanding the difficulties that arise in attempts to prove the
main result directly.


One of the basic issues that makes finding a proof of Theorem~\ref{thm:main} nontrivial is the ``global" nature of the assertion it makes.
The result says that from an arbitrary basis for $H^1(A(\gam),\bF)$, one can find an assignment between these arbitrary basis vectors
and vectors in the standard basis which respects the cup product structure. Experiments suggest and careful consideration shows
that there is no
canonical way to realize such a bijection, and this is the reason that inductive strategies do not seem to work; even if one assumes the
existence of such a bijection for a proper subgraph, extending by even one vertex seems technically impossible. These issues
already appear in the following simple example:

\begin{ex}
\label{ex0}

Consider the defining graph $\Gamma$ with vertices $\{v_1,v_2,v_3,v_4\}$ and edges
\[\{v_1,v_2\}, \{v_1,v_3\},\{v_1,v_4\},\] and the basis $\mathcal B=\{w_1,w_2,w_3,w_4\}$ for the first cohomology of $A(\Gamma)$ given by
the change of basis matrix \[A=\begin{pmatrix}1&0&0&0\\ 0&1&0&1\\0&1&0&0\\ 0&0&1&0 \end{pmatrix}.\]
Now, the associated cohomology basis graph $\GB$ has edges \[\{w_1,w_2\}, \{w_1,w_3\}, \{w_1,w_4\},\]
which is isomorphic to $\Gamma$. Hence, the natural assignment $v_i\mapsto w_i$ induces an isomorphism of graphs,
and in particular an inclusion $\Gamma\subseteq\GB$.

Now, we add the edge $\{v_2,v_4\}$ to the defining graph, obtaining a new graph $\Lambda$, and we retain the basis
$\mathcal B$. The cohomology basis graph
$\Lambda_{\mathcal{B}}$ has edges \[\{w_1,w_2\}, \{w_1,w_3\}, \{w_1,w_4\}, \{w_2,w_3\},\] and the previous
assignment $v_i\mapsto w_i$ do not extend to the new graphs. Of course, Theorem \ref{thm:main} ensures
that there exists another suitable assignment for the new graphs, for instance one sending $v_3$ to $w_4$ and $v_4$ to $w_3$,
but this is an \emph{ad hoc} modification that is difficult to make canonical.

\end{ex}


In searching for a formula defining a canonical
bijection between the vertices of the graph and the arbitrary basis vectors, one encounters many
reasonable-sounding linear algebraic claims which end up being false. One might hope, for example, that if $A\in\mathrm{GL}_n(\bF_2)$
then there is a reordering of the rows of $A$ so that all the principal minors are nonsingular; one might hope for this to be the case for just the
$2$--dimensional principal minors, and 
it is not difficult to see how this claim would imply Theorem~\ref{thm:main}. A counterexample to the claim
is given by the following:

\begin{ex}
\label{ex1}
Consider the invertible matrix \[A=\begin{pmatrix}1&1&0&0\\ 1&1&1&1\\0&0&1&0\\ 0&1&1&1 \end{pmatrix}.\] Then, a straightforward analysis
shows that in every reordering of the rows there is at least one $2$--dimensional principal minor which is singular.

\end{ex}

One further example that we give in this section concerns the practicality of the planarity test described at the end of Section
\ref{ss:biautomatic}.

\begin{ex}
\label{ex2}
Consider the group $$G=\langle x_1,x_2,x_3,x_4,x_5\textrm{ } | \textrm{ } x_1x_2x_5x_4=x_2x_5x_4x_1, x_3x_2x_5x_4=x_2x_5x_4x_3, x_4x_5=x_5x_4\rangle.$$ This groups is abstractly isomorphic to a right-angled Artin group on a graph with five vertices.
If we consider the dual generators $$\{x^*_1,x^*_2,x^*_3,x^*_4,x^*_5\}$$ in $H^1(G;\mathbb{F}_2)$, it is not hard to check using the arguments of
Subsection~\ref{ss:biautomatic} that the only non-trivial products of two of these elements are $x_2^*x_i^*$ for every $i\neq 2$ and $x_4^*x_5^*$. Hence, the cohomology graph is a star with one additional
edge, which is planar. Thus, the defining graph is also planar, by Theorem \ref{thm:cdv}.

\end{ex}


\section{Graph minors and the cohomology basis graph}\label{sec:minors}
\label{sect:minors}

In this last section, we investigate the behavior of the cohomology graph $\GB$ under taking
minors of $\gam$. This is particularly relevant in light of Wagner's theorem (i.e.~a graph is planar if and only if it does not
admit $K_5$ or $K_{3,3}$ as a minor), though we will show that the cohomology
basis graph and graph minors do not interact in a sufficiently nice way to make this a viable approach to characterizing planarity. Thus,
we have another justification for Theorem~\ref{thm:main} being the ``correct" approach to understanding planarity through cohomological
means. 



Notice that if $\Lambda$ is an elementary minor of $\gam$ obtained by deleting an edge or vertex then there is a natural inclusion of
$\Lambda$ into $\gam$. If $\Lambda$ is an elementary minor under contracting an edge with vertices $v_1$ and $v_2$, then $\Lambda$
is equipped with a special vertex which we can formally and suggestively label $v_1+v_2$.

Let $\B$ be an arbitrary basis for $H^1(A(\gam),\bF)$.
We begin by writing the elements of $\B$ in terms of the cohomology classes that are dual to
vertices of $\gam$, i.e.~ $\{v^*_1,\ldots,v^*_n\}$.
If $\Lambda$ is an elementary minor of $\gam$, then there is a natural way to obtain
a basis $\B'$ for $H^1(A(\Lambda),\bF)$. Then, we have the following moves:

\begin{enumerate}
\item
If $\Lambda$ is obtained from $\gam$ by deleting an edge of $\gam$ then $\B'=\B$;
\item
If $\Lambda$ is obtained from $\gam$ by deleting the vertex corresponding to the cohomology class
$v^*_i$, then the vectors of $\B'$ are simply the vectors of $\B$ with every occurrence of $v^*_i$ deleted and any repeats or trivial vectors
discarded and, if necessary, one further vector discarded to ensure that $\B'$ is linearly independent;
\item
If $\Lambda$ is obtained from $\gam$ by contracting the edge connecting vertices corresponding to $v^*_i$ and $v^*_j$, then the vectors
of $\B'$ are vectors of $\B$ with $v^*_i$ and $v^*_j$ replaced by $v^*_i+v^*_j$, and with any repeats or trivial vectors discarded
and, if necessary, one further vector discarded to ensure that $\B'$ is linearly independent.
\end{enumerate}

To fix terminology, if $\B$ is an arbitrary basis for $H^1(A(\gam),\bF)$, we will call a basis $\B'$ for $H^1(A(\Lambda),\bF)$ obtained by
one of the previous three moves an \emph{elementary minor basis}.

We first make some remarks about the naturality of the transformation from $\B$ to $\B'$. For edge deletions,
there is little to say. If $\Lambda$ is obtained from $\gam$ by deleting an edge between vertices $v_1$ and $v_2$,
then there is a natural homomorphism
$\phi\colon: A(\Lambda)\to A(\gam)$ which is defined by simply declaring that $v_1$ and $v_2$ commute with each other. In this case,
we get a natural map
\[\phi^*\colon H^1(A(\gam),\bF)\to H^1(A(\Lambda),\bF),\] and $\phi^*(\B)=\B'$.

The case of vertex deletion is similar, using the fact that
the inclusion $\Lambda\to\gam$ induces a map $\psi\colon A(\Lambda)\to A(\gam)$, and consequently a map
\[\psi^*\colon H^1(A(\gam),\bF)\to H^1(A(\Lambda),\bF).\] We then consider $\psi^*(\B)$, which contains a basis of $H^1(A(\Lambda),\bF)$,
and so
we discard a vector if necessary to obtain $\B'$.

Though $\B'$ is obtained from $\B$ in a reasonably natural way, there is still a matter of the choice of which
vector to discard.
Proposition~\ref{prop:minor} below shows that the choice made is absorbed by a choice of minor on the cohomology basis graph side.

The case of edge contraction is slightly different, since there is a natural map \[\chi\colon A(\gam)\to A(\Lambda),\] though not the other way.
Let $v_1$ and $v_2$ be two adjacent vertices of $\gam$ that are identified in $\Lambda$, wherein we call the resulting vertex $v_0$. Write
$\{v_1^*,v_2^*,v_0^*\}$ for the corresponding dual cohomology classes.
Under the natural induced map $H^1(A(\Lambda),\bF)\to H^1(A(\gam),\bF)$, the
cohomology class $v_0^*$ is sent to the cohomology class $v_1^*+v_2^*$. Thus, $H^1(A(\Lambda),\bF)$ is
canonically isomorphic to a subspace
of $H^1(A(\gam),\bF)$ with the identification $v_0^*\mapsto v_1^*+v_2^*$. We let
\[\varphi\colon H^1(A(\gam),\bF)\to H^1(A(\gam),\bF)\] be the
endomorphism which is
the identity on all generators dual to vertices other than $v_1$ and $v_2$, and otherwise $v_1^*,v_2^*\mapsto v_1^*+v_2^*$.
Finally, we let
\[\chi_*\colon H^1(\varphi(A(\gam),\bF))\to H^1(A(\Lambda),\bF)\] be the map that is the identity on all
generators dual to vertices other than $v_1$ and $v_2$, and which sends $v_1^*+v_2^*\mapsto v_0^*$.
Then $\chi_*\circ \varphi(\B)$ contains a basis
for $H^1(A(\Lambda),\bF)$, and so we discard a vector if necessary to obtain $\B'$.

\begin{prop}\label{prop:minor}
Let $\gam$ be a graph, let $\Lambda$ an elementary minor of $\gam$ obtained by either edge deletion or
vertex deletion, let $\B$ be an arbitrary basis for $H^1(A(\gam),\bF)$, and let
$\B'$ be an elementary minor basis for $H^1(A(\Lambda),\bF)$. Then the graph $\G_{\B'}$ is a minor of the graph $\GB$.
\end{prop}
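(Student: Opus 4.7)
The plan is to handle the two elementary minor operations separately, in each case realizing $\G_{\B'}$ as obtainable from $\GB$ through vertex and edge deletions only. For edge deletion $\Lambda = \gam \setminus \{e\}$ with $e = \{v_i, v_j\}$ and $\B' = \B$, the two graphs share the same vertex set. The cup product of any $w, w' \in \B$ in $H^*(A(\Lambda), \bF)$ is obtained from its counterpart in $H^*(A(\gam), \bF)$ by dropping the coefficient of $e^*$, since $v_i^* \smile v_j^*$ vanishes in the former while it equals $\pm e^*$ in the latter. Linear independence of the basis $\{f^*\}_{f \in E(\gam)}$ in $H^2(A(\gam), \bF)$ then shows that a nonzero cup product in $H^*(A(\Lambda), \bF)$ lifts to a nonzero cup product in $H^*(A(\gam), \bF)$, so $\G_{\B'}$ is a spanning subgraph of $\GB$.

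For vertex deletion of $v_n$, I would introduce the projection $\pi \colon H^1(A(\gam), \bF) \to H^1(A(\Lambda), \bF)$ sending $v_n^* \mapsto 0$ and $v_j^* \mapsto v_j^*$ for $j < n$, and set $\bar w_i := \pi(w_i)$. The basis $\B'$, of size $n - 1$, is a subset of the multiset $\{\bar w_1, \ldots, \bar w_n\}$, so one can fix an injection $\phi \colon \B' \hookrightarrow \B$ selecting a representative lift for each $\bar w \in \B'$. The central computation uses the direct sum decomposition $H^2(A(\gam), \bF) = V_1 \oplus V_2$, where $V_1$ is spanned by $f^*$ for edges of $\gam$ not incident to $v_n$ (canonically identified with $H^2(A(\Lambda), \bF)$) and $V_2$ by those incident to $v_n$. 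Expanding $w_i = \bar w_i + a_i^n v_n^*$ and using $v_n^* \smile v_n^* = 0$, the $V_1$-projection of $w_i \smile w_j$ equals $\bar w_i \smile \bar w_j$ under the identification $V_1 \cong H^2(A(\Lambda), \bF)$. Hence $\bar w_i \smile \bar w_j \neq 0$ in $H^*(A(\Lambda), \bF)$ forces $w_{\phi(\bar w_i)} \smile w_{\phi(\bar w_j)} \neq 0$ in $H^*(A(\gam), \bF)$, so every edge of $\G_{\B'}$ produces an edge of $\GB$ between the chosen representatives. This realizes $\G_{\B'}$ as a subgraph of the induced subgraph of $\GB$ on $\phi(\B')$, which is itself obtained from $\GB$ by a single vertex deletion; further edge deletions then complete the task.

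The hardest part I anticipate is the bookkeeping in the vertex deletion case when projections collapse, that is, when some $\bar w_i = 0$ or $\bar w_i = \bar w_j$ for distinct $i, j$, forcing a choice of vectors to discard and of representative lifts to keep. What keeps this harmless is that the implication $\bar w \smile \bar w' \neq 0 \Rightarrow w \smile w' \neq 0$ depends only on the projected vectors and not on the chosen lifts, since the $V_1$-projection of $w \smile w'$ is determined by $\bar w$ and $\bar w'$ alone. Once this invariance is noted, any coherent selection of representatives works, and both cases reduce to sequences of vertex and edge deletions on $\GB$.
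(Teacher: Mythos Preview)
Your proposal is correct and follows essentially the same approach as the paper: in both the edge-deletion and vertex-deletion cases you show that adjacency in $\G_{\B'}$ forces adjacency between suitable lifts in $\GB$, so that $\G_{\B'}$ is obtained from $\GB$ by vertex and edge deletions. Your packaging via the decomposition $H^2(A(\gam),\bF)=V_1\oplus V_2$ and the observation that the $V_1$-component of $w\smile w'$ depends only on $\bar w,\bar w'$ is a cleaner way to express what the paper does through a case analysis on $2\times 2$ minors, but the underlying argument is the same.
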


\begin{proof}
We verify the claim for the two moves that we allow.

{\bf Edge deletion.}
In this case, $\B=\B'$. Suppose $b_1,b_2\in\B$ are adjacent to each other in $\GB$, and write these basis vectors as a sum of cohomology
classes dual to the vertices of $\gam$. Abusing notation slightly, there is an edge $\{v_1,v_2\}$ of $\gam$ such
that $v_1$ but not $v_2$ occurs in the expression of $b_1$, and $v_2$ but not $v_1$ occurs in the expression of $b_2$. If the edge
$\{v_1,v_2\}$ persists in $\Lambda$ then $b_1$ and $b_2$ remain adjacent in $\G_{\B'}$. If the edge $\{v_1,v_2\}$ does not persist in
$\Lambda$, then $b_1$ and $b_2$ may or may not remain adjacent in $\G_{\B'}$, contingent on the existence of another edge of $\gam$
that witnesses the continued adjacency of $b_1$ and $b_2$.

If $b_1$ and $b_2$ are nonadjacent in $\GB$, then we wish to argue that these vertices remain nonadjacent in $\G_{\B'}$.
Let $\{v_1,v_2\}$ denote an arbitrary edge of $\gam$. Necessarily, one of
the following three possibilities holds, up to relabeling vertices or basis elements:
\begin{enumerate}
\item
Neither $v_1$ nor $v_2$ occurs in the expressions for $b_1$ and $b_2$;
\item
Both $v_1$ and $v_2$ occur in both expressions for $b_1$ and $b_2$;
\item
The class $v_1$ occurs in the expression for $b_1$ but $v_2$ does not occur in the expression for $b_2$.
\end{enumerate}

Now, there is a pair of vertices $\{v_i,v_j\}$ which span an edge of $\gam$ but such that
$v_i^*\smile v_j^*=0$ in $H^1(A(\Lambda))$, with no other cup products between dual vertex basis vectors being changed.
It follows immediately then that pairs of nonadjacent vertices in $\GB$ remain nonadjacent in $\G_{\B'}$.

{\bf Vertex deletion.}
Retaining notation from above, let $\psi^*(\B)\subset H^1(A(\Lambda))$ be the image of $\B$ under the map induced by the inclusion
$\Lambda\to\gam$, and let $b_1,b_2\in\B$. For $i\in\{1,2\}$, writing $\psi^*(b_i)$ and $b_i$ in terms of the vertex duals,
we simply have that a summand $v$ is deleted from $\psi^*(b_i)$ if it occurs in $b_i$. If $\psi^*(b_1)=\psi^*(b_2)$ then we will begin by
deleting one of them (which is vertex deletion in $\GB$) and then proceed by applying $\psi^*$, which will then yield $\B'$ without
any further deletions.

Suppose that $b_1$ and $b_2$ are adjacent in $\GB$, and that this adjacency is witnessed only by edges in $\gam$ that are incident
to $v$. Then after deleting $v$ from $\gam$, all these edges are severed, in which case $\psi^*(b_1)\smile \psi^*(b_2)=0$. If the adjacency
is witnessed by an edge that is not incident to $v$, then $\psi^*(b_1)\smile \psi^*(b_2)\neq 0$.

Suppose that $b_1$ and $b_2$ are nonadjacent in $\GB$. Then for an arbitrary edge $\{v_1,v_2\}$ of $\gam$, we have the three possibilities
as in the case of edge deletion. It is straightforward to check that the three possibilities persist after applying $\psi^*$, in which case
$\psi^*(b_1)\smile \psi^*(b_2)=0$.

Let $\gam_{\psi^*}$ be the graph obtained by taking vertices to be elements $\psi^*(\B)$, and adjacency to be given by nonvanishing cup
product. Then the preceding argument shows that $\gam_{\psi^*}$ is a minor of $\GB$. The basis $\B'$ is obtained by
(possibly) deleting an element of $\psi^*(\B)$, in which case we see that $\G_{\B'}$ is a minor of $\gam_{\psi^*}$, as desired.
\end{proof}

It is not generally true that if $\Lambda$ is obtained from $\gam$ by edge contraction then $\G_{\B'}$ is a minor of $\GB$. Consider for instance the dumbbell graph $\Gamma$ from Figure \ref{figure: example}. It has $n + m + 1$ edges.

\begin{center}
\includegraphics[scale=1]{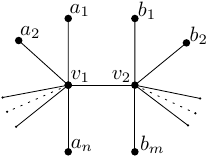}
\captionof{figure}{The dumbbell graph $\Gamma$.}
\label{figure: example}
\end{center}


For compactness of notation, we will conflate names of vertices and corresponding dual cohomology classes, and cease writing asterisk
superscripts.
To simplify notation further, we will write sums of cohomology classes multiplicatively.
Let \[\B=\{b_1,\ldots,b_m,a_1b_1,v_1a_1,\ldots,v_1a_n,
v_2a_1\}.\] It is straightforward to verify that this is indeed a basis for $H^1(A(\gam),\bF)$. The edges
of $\GB$ are of the following form:
\begin{enumerate}
\item
$\{b_i,v_2a_1\}$ for $1\leq i\leq m$;
\item
$\{v_2a_1,v_1a_j\}$ for $1\leq j\leq n$;
\item
$\{v_2a_1,a_1b_1\}$;
\item
$\{v_1a_i,v_1a_j\}$ for $i\neq j$;
\item
$\{a_1b_1,v_1a_j\}$ for $1\leq j\leq n$.
\end{enumerate}

We check easily that $\GB$ has \[\frac{n^2}{2}+\frac{3n}{2}+m+1\]
total edges. Now to compute $\B'$ and $\G_{\B'}$, we introduce the symbol $z$ for $v_1+v_2$.
Each occurrence of $v_1$ and $v_2$ is replaced by $z$. The vertices $v_2a_1$ and $v_1a_1$ become identical, and so we delete one
of them. Then we have \[\B'=\{b_1,\ldots,b_m,a_1b_1,za_1,\ldots,za_n\}.\] The edges of $\G_{\B'}$ are of the following form:

\begin{enumerate}
\item
$\{b_i,za_j\}$ for $1\leq i\leq m$ and $1\leq j\leq n$;
\item
$\{za_i,za_j\}$ for $i\neq j$;
\item
$\{a_1b_1,za_j\}$ for $1\leq j\leq n$.
\end{enumerate}

We check easily that $\G_{\B'}$ has \[\frac{n^2}{2}+mn+\frac{n}{2}\] total edges. Thus, the difference between the total number of edges
of $\G_{\B'}$ and $\GB$ is $(m-1)(n-1)-2$. Evidently, this difference can be made positive by choosing the parameters $n$ and $m$ suitably.
Now, if $\G_{\B'}$ were a minor of $\GB$ then $\G_{\B'}$ would have fewer edges than $\GB$, which is a contradiction.


\section*{Acknowledgments}

\vspace{.5cm}

We thank Y.~Antol\'{i}n, O. C\'ardenas, M. Casals-Ruiz, M. Chudnovsky, J. G\'alvez, M.~Gheorghiu, P. Gim\'enez,
J. Gonz\'alez-Meneses, C. Johnson, A. M\'arquez, J. Mart\'{i}nez, S. Morey, F. Muro, C. Valencia and A. Viruel for helpful comments. We are in particular very grateful to K. Li, M. Tsatsomeros and A. van Tuyl for their interest in our work.
Ram\'{o}n Flores is supported by FEDER-MEC grant PID2020-117971GB-C21 of the Spanish Ministery of Science. Thomas Koberda is partially supported  by NSF Grant DMS-2002596.
Delaram Kahrobaei is supported in part by a PSC-CUNY grant from CUNY.
Corentin Le Coz is supported by the FWO and the F.R.S.–FNRS under the Excellence of Science (EOS) program (project ID 40007542).

The authors are grateful to an anonymous referee who provided numerous helpful comments which greatly improved the paper.
\bibliographystyle{amsplain}

\end{document}